\newif\ifLTEX
\newtheorem{thm}{Theorem}[section]
\newtheorem{lem}[thm]{Lemma}
\newtheorem{cor}[thm]{Corollary}
\newtheorem{prop}[thm]{Proposition}
\theoremstyle{definition}
\newcommand{\R}{\mathbb{R}}
\newcommand{\Z}{\mathbb{Z}}
\newcommand{\M}{\mathrm{Mod}_{0,n+1}}
\newcommand{\Mb}{\mathrm{Mod}_{0,n}^1}
\newcommand{\PM}{\mathrm{PMod}_{0,n+1}}
\newcommand{\HQ}{H_1(QB_n; \Z )}
\numberwithin{equation}{section}
\title[Minimal generating sets for quasitoric braid group]{Minimal generating sets and the abelianization for the quasitoric braid group}
\author[G.~Omori]{Genki Omori}
\address{
(Genki Omori)
Department of Mathematics, Faculty of Science and Technology, Tokyo University of Science, 2641 Yamazaki, Noda-shi, Chiba, 278-8510 Japan
}
\email{omori\_genki@rs.tus.ac.jp}
\author[]{}
\address{
()
}
\email{}
\keywords{braid group; quasitoric braid group; minimal generating set; abelianization}
\subjclass[2010]{20F36, 57M07, 57M25, 57M05, 20F05}
\date{\today}
\begin{document}
\maketitle
\begin{abstract}
A toric braid is a braid whose closure is a torus link in $\R ^3$. 
Manturov~\cite{Manturov} generalized toric braids that is called \emph{quasitoric braids} and showed that the subset of quasitoric braids in the classical braid group is a subgroup of the braid group. 
We call this subgroup the \emph{quasitoric braid group}. 
In this paper, we give two minimal generating sets for the quasitoric braid group and determine its abelianization. 
The minimalities of these two generating sets are obtained from a lower bound by the number of generators for the abelianization.  
\end{abstract}

\section{Introduction}

%\subsection{Background}
Let $B_{n}$ be the classical braid group of $n\geq 1$ strands and we call an element in $B_n$ a \emph{$n$-braid} or a \emph{braid}. 
Since $B_1$ is trivial and $B_2$ is isomorphic to $\Z $, we assume that $n\geq 3$ in this paper. 
A \emph{link} is a smoothly embedded disjoint circles in $\R ^3$.  
Alexander~\cite{Alexander} proved that every link is represented by a closure of a braid (see also~\cite{Vogel}). 
A \emph{torus link} is a link which is included in a standardly embedded torus in $\R ^3$. 
A braid $\beta $ is a \emph{toric braid} if the closure of $\beta $ is a torus link. 
By the definition, a torus link is a closure of a braid $\beta (n,m)=(\sigma _1\sigma _2 \cdots \sigma _{n-1})^m\in B_n$ for some positive integers $n$ and $m$, where $\sigma _i$ is a half-twist braid as on the right-hand side in Figure~\ref{fig_braid_product_def} (for precise definitions, see Section~\ref{section_braid-quasitoric}). 
We call the braid $\beta (n,m)$ a \emph{$(n,m)$-toric braid}. 
Manturov~\cite{Manturov} introduced a generalization $(\sigma _1^{\varepsilon _1^1}\sigma _2^{\varepsilon _2^1}\cdots \sigma _{n-1}^{\varepsilon _{n-1}^1})(\sigma _1^{\varepsilon _1^2}\sigma _2^{\varepsilon _2^2}\cdots \sigma _{n-1}^{\varepsilon _{n-1}^2})\cdots (\sigma _1^{\varepsilon _1^m}\sigma _2^{\varepsilon _2^m}\cdots \sigma _{n-1}^{\varepsilon _{n-1}^m})$ of the $(n,m)$-toric braid $\beta (n,m)$, where $\varepsilon _i^j\in \{ \pm 1\}$. 
Such a braid is called a \emph{$(n,m)$-quasitoric braid} or a \emph{$n$-quasitoric braid}. 
Lamm~\cite{Lamm1, Lamm2} independently introduced a generalization of toric braids which are called \emph{rosette braids} and are conjugate to a quasitoric braids in the braid group. 
Manturov~\cite{Manturov} and Lamm~\cite{Lamm1, Lamm2} independently proved that every link is represented by a closure of a quasitoric braid or a rosette braid, respectively. 
Moreover, by the proof, we see that every closure of a $n$-braid is represented by a closure of a $n$-quasitoric braid.  

We denote by $QB_n$ a subset of $B_n$ which consists of $n$-quasitoric braids. 
By Lemma~1 in \cite{Manturov}, we see that the subset $QB_n$ is a subgroup of $B_n$, and call the subgroup $QB_n$ the \emph{quasitoric braid group} of $n$ strands. 
Lamm~\cite{Lamm1, Lamm2} independently proved that the set of rosette braids is also a subgroup of $B_n$. 
Let $\Psi \colon B_n \to S_n$ be the surjective homomorphism which is defined by $\Psi (\sigma _i)=(i\ i+1)$ for $i\in \{ 1, 2, \dots , n\}$, where $S_n$ is the symmetric group of degree $n$. 
Then we have $B_n=\Psi ^{-1}(S_n)$, and $B_n$ is generated by two elements by a well-known fact. 
Since $B_n$ is not a cyclic group for $n\geq 3$, this generating set is minimal. 
The \emph{pure braid group} is the kernel of $\Psi $, namely, $PB_n=\Psi ^{-1}(\{ 1\} )$. 
By Corollary~\ref{cor_mini-gen_PB}, $PB_n$ has $\binom{n}{2}$ generators and this generating set is minimal. 

In generally, there is a natural problem that for a given subgroup $H$ of $S_n$, what is the minimal number of generators of $\Psi ^{-1}(H)$. 
We denote 
$\rho =\begin{pmatrix}
1 & 2 & \cdots & n-1 & n \\
2 & 3 & \cdots & n & 1 \\
\end{pmatrix}
\in S_n$ and consider the order $n$ cyclic subgroup $\left< \rho \right> \cong \Z _n=\Z /n\Z $ of $S_n$. % which is generated by $\rho $. 
The image of a $n$-quasitoric braid with respect to $\Psi $ lies in $\left< \rho \right>$, moreover, by Lemma~4 in~\cite{Manturov}, $PB_n$ is included in $QB_n$. 
Hence $\Psi ^{-1}(\Z _n[\rho ])=QB_n$.  
In this paper, we answer the problem above for the case that $H=\Z _n[\rho ]$ and the main theorem in this paper is as follows. 

\begin{thm}\label{thm_mini-gen}
\begin{enumerate}
\item For $n\geq 3$ is odd, $QB_n$ is generated by $\frac{n+1}{2}$ elements. 
\item For $n\geq 4$ is even, $QB_n$ is generated by $\frac{n+2}{2}$ elements. 
\end{enumerate}
\end{thm}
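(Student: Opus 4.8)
The plan is to use the exact sequence $1 \to PB_n \to QB_n \to \Z_n[\rho] \to 1$ induced by $\Psi$, together with the conjugation action on $PB_n$ of the periodic braid $\delta := \sigma_1\sigma_2\cdots\sigma_{n-1}$, which satisfies $\Psi(\delta)=\rho$. Since $\rho$ generates $\Z_n[\rho]$, every element of $QB_n$ lies in $PB_n\cdot\langle\delta\rangle$; hence if $S\subseteq PB_n$ is any subset whose $\langle\delta\rangle$-conjugates $\{\,\delta^k s\delta^{-k}: s\in S,\ k\in\Z\,\}$ generate $PB_n$, then $\{\delta\}\cup S$ generates $QB_n$. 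It therefore suffices to produce such an $S$ with $|S|=\lfloor n/2\rfloor$, because $\lfloor n/2\rfloor+1$ equals $\frac{n+1}{2}$ when $n$ is odd and $\frac{n+2}{2}$ when $n$ is even.

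Next I would bring in the standard generators $A_{ij}$ ($1\le i<j\le n$) of $PB_n$ (Corollary~\ref{cor_mini-gen_PB}) and record how $\delta$ acts on them: $\delta A_{ij}\delta^{-1}=A_{i+1,j+1}$ whenever $j\le n-1$ (immediate from $\delta\sigma_i\delta^{-1}=\sigma_{i+1}$ for $i\le n-2$ and the defining word for $A_{ij}$), while for $j=n$ there is a ``wrap-around'' identity expressing $\delta A_{in}\delta^{-1}$ as an explicit conjugate of $A_{1,i+1}$ by a word in the generators $A_{k\ell}$ with $\ell\le n-1$. Reading the index set $\{i,j\}$ of $A_{ij}$ as a $2$-element subset of $\Z/n\Z$, the rotation $\rho$ groups the $\binom{n}{2}$ generators into orbits indexed by the circular distance $d\in\{1,\dots,\lfloor n/2\rfloor\}$ — of size $n$ for $d<n/2$, and of size $n/2$ for $d=n/2$ (the exceptional orbit, occurring only for even $n$). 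I would then set $S=\{A_{1,1+d}: 1\le d\le\lfloor n/2\rfloor\}$, one generator per orbit, noting $S\subseteq PB_n\subseteq QB_n$.

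The core of the argument is to verify that $\langle\,\delta^k s\delta^{-k}: s\in S,\ k\in\Z\,\rangle=PB_n$. Iterating the shift identity on $A_{1,1+d}$ at once produces every $A_{i,i+d}$ with $i+d\le n$; the generators $A_{ij}$ with $j-i$ large (but circular distance at most $\lfloor n/2\rfloor$) are then recovered from the wrap-around identity by conjugating back by the word it introduces, which — if the generators are produced in a suitable order, say by increasing linear distance within each circular class — already lies in the subgroup generated so far. The main obstacle is exactly this bookkeeping: making the wrap-around formula precise and checking that the conjugating words never require a generator not yet available, and in particular handling separately the shorter orbit $d=n/2$ in the even case. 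Granting this, $\{\delta\}\cup S$ is a generating set for $QB_n$ of cardinality $\lfloor n/2\rfloor+1$, which yields (1) and (2). (That these generating sets are minimal is not part of this statement; it follows from the lower bound provided by the abelianization of $QB_n$ computed later.)
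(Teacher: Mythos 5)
Your reduction step is fine: since $\Psi(\delta_0)=\rho$ generates $\Z_n[\rho]$ and $PB_n\subseteq QB_n$, any set $S\subseteq PB_n$ whose $\langle\delta_0\rangle$-conjugates generate $PB_n$ gives a generating set $\{\delta_0\}\cup S$ of $QB_n$, and the count $\lfloor n/2\rfloor+1$ matches the theorem. But the core of your argument -- that the conjugates $\delta_0^k A_{1,1+d}\delta_0^{-k}$, $1\le d\le\lfloor n/2\rfloor$, do generate $PB_n$ -- is precisely what you do not prove. Shifting only produces the standard generators $A_{i,i+d}$ with $i+d\le n$; every pair of \emph{linear} distance greater than $n/2$ (and these are on the order of $n^2/8$ generators, not a few exceptional ones) is reached only after wrap-around, where what you obtain is $wA_{1,i+1}w^{-1}$ for some unspecified pure-braid word $w$, not $A_{1,i+1}$ itself. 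You never write down this wrap-around formula, and the claim that, in a suitable order of production, the conjugating words ``already lie in the subgroup generated so far'' is exactly the nontrivial content of the theorem; as stated (``Granting this\dots'') it is an unverified induction whose hypothesis could easily fail for a careless ordering, since the correction words typically involve generators $A_{1,k}$ of large linear distance within small circular-distance classes. So the proposal is a plausible strategy with its decisive step missing, not a proof. (A minor point: Corollary~\ref{cor_mini-gen_PB} concerns the block full twists $t_{i,j}$ of consecutive strands, not the classical two-strand generators $A_{ij}$; the classical generating set is of course available, but not from that corollary.)

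For contrast, the paper avoids this bookkeeping altogether by working with the block twists: by Proposition~\ref{prop_pres_QB} and Lemma~\ref{lem_t_ij-delta_conj} one has $\delta_0 t_{i,j}\delta_0^{-1}=t_{i+1,j+1}$ for $j<n$ and $\delta_0^n=t_{1,n}$, so the subgroup generated by $\delta_0$ and $t_{1,j}$, $2\le j\le N$, already contains all narrow twists; the wide twists $t_{1,j}$, $N+1\le j\le n-1$, are then recovered by a single explicitly verified lantern relation $t_{1,n-1}t_{j+1,n}\cdot\delta_0^{-1}t_{1,j+1}\delta_0=t_{1,j}t_{j+1,n-1}t_{1,n}$, applied once to get $t_{1,n-1}$ and then in a clean downward induction on $j$ (Theorem~\ref{thm_mini-gen1}). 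If you want to salvage your version with the generators $A_{ij}$, you must state the wrap-around conjugation formula explicitly and prove the availability of its correction words at each stage; that verification is the whole proof.
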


Theorem~\ref{thm_mini-gen} is proved in Section~\ref{section_mini-gen} and we give two explicit minimal generating sets for $QB_n$ (Theorems~\ref{thm_mini-gen1} and~\ref{thm_mini-gen1}). 

The integral first homology group $H_1(G; \Z )$ of a group $G$ is isomorphic to the abelianization of $G$. 
The integral first homology group of $QB_n$ is as follows: 

\begin{thm}\label{thm_abel}
%For $n\geq 3$, 
\[
H_1(QB_n; \Z )\cong \left\{ \begin{array}{ll}
 \Z ^{\frac{n-1}{2}}\oplus \Z _n&\text{if } n\geq 3 \text{ is odd},   \\
 \Z ^{\frac{n}{2}}\oplus \Z _{\frac{n}{2}}& \text{if } n\geq 4 \text{ is even}.
 \end{array} \right.
\]
\end{thm}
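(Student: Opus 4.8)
The plan is to derive $H_1(QB_n;\Z)$ from the extension $1\to PB_n\to QB_n\to \Z_n\to 1$ (recall $QB_n=\Psi^{-1}(\langle\rho\rangle)$ and $PB_n=\Psi^{-1}(\{1\})$, so $QB_n/PB_n\cong\Z_n$) by means of the five-term exact sequence in group homology. Since $H_2(\Z_n;\Z)=0$, that sequence collapses to a short exact sequence
\[
0\longrightarrow H_1(PB_n;\Z)_{\Z_n}\longrightarrow H_1(QB_n;\Z)\longrightarrow \Z_n\longrightarrow 0,
\]
where $H_1(PB_n;\Z)_{\Z_n}$ denotes the coinvariants for the action of $\Z_n\cong QB_n/PB_n$ on $H_1(PB_n;\Z)$ induced by conjugation in $QB_n$ (well defined because inner automorphisms act trivially on homology). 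So the computation breaks into three parts: identify $H_1(PB_n;\Z)$ as a $\Z[\Z_n]$-module, compute its coinvariants, and decide the resulting extension.

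First, $H_1(PB_n;\Z)$ is free abelian on the classes of the standard generators $A_{ij}$, $1\le i<j\le n$, hence is the permutation module on the set $\mathcal P$ of two-element subsets of $\{1,\dots,n\}$. Take the toric braid $\delta=\sigma_1\sigma_2\cdots\sigma_{n-1}$ as a lift of a generator of $\Z_n$ (so that $\Psi(\delta)$ is an $n$-cycle). Using $\delta\sigma_i\delta^{-1}=\sigma_{i+1}$ for $1\le i\le n-2$, one checks that conjugation by $\delta$ carries the class of $A_{ij}$ to that of $A_{\rho(i)\rho(j)}$, indices taken modulo $n$ with $A_{kl}:=A_{lk}$ when $k>l$; the case $j=n$, where the index wraps around, needs a short separate computation but has the same outcome at the level of abelianizations. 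Thus $H_1(PB_n;\Z)$ is the $\Z[\Z_n]$-permutation module on $\mathcal P$ with $\Z_n$ acting by the cyclic shift $\{i,j\}\mapsto\{i+1,j+1\}$.

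Second, the coinvariants of a permutation module are free on the orbits: $H_1(PB_n;\Z)_{\Z_n}\cong\Z[\mathcal P/\Z_n]$. An orbit is determined by the ``distance'' $\pm(i-j)\in\Z/n$ of its members, i.e.\ by an unordered pair $\{d,-d\}$ with $d\ne 0$; there are $(n-1)/2$ such pairs when $n$ is odd and $n/2$ when $n$ is even. In the even case the extra orbit is the orbit of diameters $\{i,i+\tfrac n2\}$, whose $\Z_n$-stabilizer is $\{0,\tfrac n2\}$, so that this orbit has $\tfrac n2$ elements while every other orbit has $n$. Hence $H_1(PB_n;\Z)_{\Z_n}\cong\Z^{(n-1)/2}$ for $n$ odd and $\cong\Z^{n/2}$ for $n$ even.

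Third, the extension class is obtained by lifting a generator of $\Z_n$ to $\delta$ and inspecting $\delta^n$: this is the full twist $\Delta_n^2\in PB_n$, whose class in $H_1(PB_n;\Z)$ is $\sum_{\{i,j\}}[A_{ij}]$ (each generator with multiplicity one, since $\Delta_n^2$ is a product of all the $A_{ij}$). The extension class is the image of this element in $H_1(PB_n;\Z)_{\Z_n}$ reduced modulo $n$, which after grouping the $A_{ij}$ by orbit equals $\sum_{\text{orbits }O}|O|\cdot[O]$. When $n$ is odd every orbit has size $n$, so this is divisible by $n$ and the class vanishes: the sequence splits and $H_1(QB_n;\Z)\cong\Z^{(n-1)/2}\oplus\Z_n$. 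When $n$ is even the image is $n\cdot(\text{sum of the non-diameter orbit classes})+\tfrac n2\cdot u$, congruent modulo $n$ to the nonzero element $\tfrac n2\cdot u$, where $u$ is the class of the diameter orbit. The extension $0\to\Z^{n/2}\to H_1(QB_n;\Z)\to\Z_n\to 0$ with this class is then $\Z^{n/2-1}\oplus\bigl(\Z^2/\Z\langle(n,-\tfrac n2)\rangle\bigr)$, and the Smith normal form of $(n\ \ {-\tfrac n2})$ gives $\Z^2/\Z\langle(n,-\tfrac n2)\rangle\cong\Z\oplus\Z_{n/2}$, whence $H_1(QB_n;\Z)\cong\Z^{n/2}\oplus\Z_{n/2}$. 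The main obstacle is the first part: pinning down the conjugation action of $\delta$ on the $A_{ij}$ — especially the wrap-around index — precisely enough to recognize $H_1(PB_n;\Z)$ as the permutation module on $\mathcal P$; after that everything is orbit-counting plus one Smith-normal-form computation.
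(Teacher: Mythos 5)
Your proof is correct, and it takes a genuinely different route from the paper's. The paper computes $H_1(QB_n;\Z )$ entirely through presentations: it derives a presentation of $PB_n$ (Proposition~\ref{prop_pres_PB}) from the pure mapping class group of the sphere, feeds it into Lemma~\ref{presentation_exact} for the extension $1\to PB_n\to QB_n\to \Z _n\to 1$, computes the conjugation words $\delta _0t_{i,n}\delta _0^{-1}=t_{2,n}^{-1}t_{1,i}^{-1}t_{2,i}t_{i+1,n}t_{1,n}$ by a lantern relation (Proposition~\ref{prop_pres_QB}), and then abelianizes the resulting presentation by hand, collapsing the relations~(4) to the single relation $t_{1,n-1}^{n}\delta _0^{-n(n-2)}=1$, resp.\ $t_{1,n-1}^{n/2}\delta _0^{-n(n-2)/2}=1$ (Lemmas~\ref{lem_proof_abel1} and~\ref{lem_proof_abel2}). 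You bypass all of this with the five-term exact sequence: since $H_2(\Z _n;\Z )=0$ you get $0\to H_1(PB_n;\Z )_{\Z _n}\to H_1(QB_n;\Z )\to \Z _n\to 0$, the coinvariants are counted by orbits of pairs under the cyclic shift (the even case producing the short ``diameter'' orbit, which is exactly where the $\Z _{n/2}$ comes from), and the extension is read off from $\delta ^n=\Delta _n^2=\prod A_{ij}$, with one Smith normal form at the end; all of these steps are sound. What your route buys is conceptual economy: only three classical facts about $PB_n$ are needed (freeness of $H_1$ on the $[A_{ij}]$, the permutation action, the full-twist decomposition), and the odd/even dichotomy becomes transparent. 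What the paper's route buys is that the presentations of $PB_n$ and $QB_n$ are results of independent interest and are reused in Section~\ref{section_mini-gen} to exhibit the explicit minimal generating sets. The one step you leave as an assertion is the wrap-around case of the action: for $j=n$, observe that $\delta A_{i,n}\delta ^{-1}$ is a twist about a simple closed curve enclosing exactly the punctures $1$ and $i+1$, hence is conjugate inside $PB_n$ to $A_{1,i+1}$ by change of coordinates, so the two classes agree in $H_1(PB_n;\Z )$ as you claim; this is the same geometric point that the paper handles with its lantern-relation computation, and filling it in makes your argument complete.
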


Theorem~\ref{thm_abel} is proved in Section~\ref{section_abel_proof}. 
The results in Theorems~\ref{thm_mini-gen} and~\ref{thm_abel} for $n=3$ are obtained from results of Shigeta~\cite{Shigeta}. 
He gave a finite presentation for $QB_3$ which have two generators.  
For a group $G$, the minimal number of generators for $H_1(G ; \Z )$ gives a lower bound of the minimal number of generators for $G$.  
By Theorem~\ref{thm_abel}, we see that the generating set for $QB_n$ in Theorem~\ref{thm_mini-gen} is minimal. 
To determine the abelianization, we give a finite presentation for $QB_n$ in Section~\ref{section_pres_quasitoric} (Proposition~\ref{prop_pres_QB}) and one of $PB_n$ in Section~\ref{section_pres_purebraid} (Proposition~\ref{prop_pres_PB}). 
We remark that the presentation for $PB_n$ in Proposition~\ref{prop_pres_PB} is independently given by Namanya~\cite{Namanya}.  

As related studies, the author~\cite{Omori1, Omori2} gave small generating sets for some subgroups of mapping class groups of a 2-sphere with marked points. 
Since the braid group $B_n$ is isomorphic to the mapping class group of a 2-disk with $n$ marked points, braid groups are related to mapping class groups of a 2-sphere with marked points via forgetful maps (for the case of pure braid groups and pure mapping class groups, there is the exact sequence~(\ref{exact_pure})). 
The author's generating sets for these subgroups in~\cite{Omori1, Omori2} are minimal except for several cases.

\section{Preliminaries}\label{Preliminaries}

\subsection{The braid group and the quasitoric braid group}\label{section_braid-quasitoric}

In this section, we review the definitions of the braid group and the quasitoric braid group. 
We take $n$ points $p_i=(i,0,0)$ $(i=1, \dots , n)$ in $\R ^3$ and a 2-disk $D$ in $\R^2\times \{ 0\}\subset \R ^3$ whose interior includes the points $p_1, p_2, \dots , p_n$. 
Then a \emph{$n$-braid} is a smoothly embedded $n$ disjoint arcs in $D\times [0,1]\subset \R^3$ whose end points lie  in $\{ p_1, p_2, \dots , p_n\} \times \{ 0, 1\}$ such that the each arc transversely intersects with $D\times \{ t\}$ for $t\in [0,1]$ at one point. % initial (resp. final) point of each arc lies in $\{ p_1, \dots , p_n\} \times \{ 0\}$ (resp. in $\{ p_1, \dots , p_n\} \times \{ 1\}$) and each arc is increasing
For $n$-braids $b_1, b_2$, we define the product $b_1b_2$ by a $n$-braid as in the center of Figure~\ref{fig_braid_product_def}, that is obtained by stacking the scaled $b_1$ on the top of the scaled $b_2$.  
The \emph{(classical) braid group} $B_n$ of $n$ strands is the group of isotopy classes of $n$-braids relative to $\{ p_1, p_2, \dots , p_n\} \times \{ 0, 1\}$ whose product is induced by the product of braids. 
We abuse notation and denote a braid and its isotopy class relative to $\{ p_1, p_2, \dots , p_n\} \times \{ 0, 1\}$ by the same symbol. 
Let $\sigma _i$ for $i\in \{ 1, 2, \dots , n-1\} $ be a $n$-braid as on the right-hand side in Figure~\ref{fig_braid_product_def}. 
As a well-known fact, $B_n$ is generated by $\sigma _1, \sigma _2 \dots , \sigma _{n-1}$. 

\begin{figure}[h]
\includegraphics[scale=0.74]{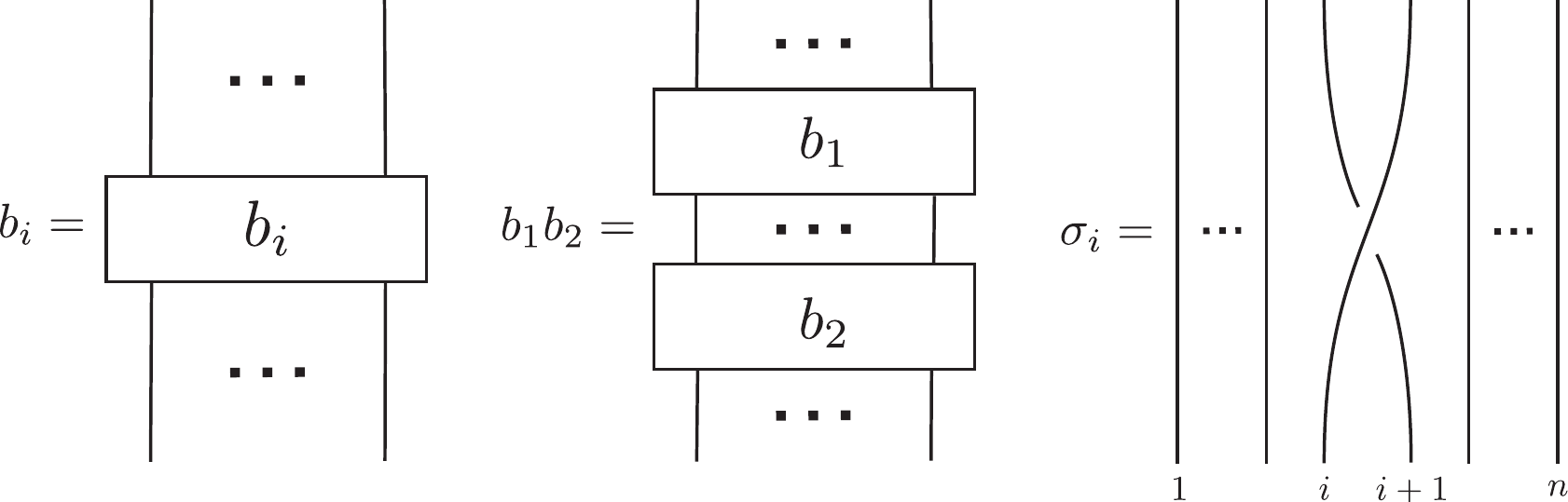}
\caption{The product $b_1b_2$ for $b_i$ $(i=1, 2)$ and the half-twist $\sigma_i\in B_{n}$ $(1\leq i\leq n-1)$.}\label{fig_braid_product_def}
\end{figure}

We denote $\beta (n,m)=(\sigma _1\sigma _2 \cdots \sigma _{n-1})^m\in B_n$ and call this $n$-braid the \emph{$(n,m)$-toric braid}. %is a $n$-braid whose closure is a torus link. 
As on the left-hand side in Figure~\ref{fig_pq-toric_braid}, the closure of $\beta (n,m)$ is the $(n,m)$-torus link. 
Remark that the image of a toric braid by the projection to the $yz$-plane is described as a shadow as in the center of Figure~\ref{fig_pq-toric_braid}.

We call a $n$-braid which is denoted by a product
\[
(\sigma _1^{\varepsilon _1^1}\sigma _2^{\varepsilon _2^1}\cdots \sigma _{n-1}^{\varepsilon _{n-1}^1})(\sigma _1^{\varepsilon _1^2}\sigma _2^{\varepsilon _2^2}\cdots \sigma _{n-1}^{\varepsilon _{n-1}^2})\cdots (\sigma _1^{\varepsilon _1^m}\sigma _2^{\varepsilon _2^m}\cdots \sigma _{n-1}^{\varepsilon _{n-1}^m})
\] 
for some $\varepsilon _i^j\in \{ \pm 1\}$ a \emph{$(n,m)$-quasitoric braid} or a \emph{$n$-quasitoric braid}. 
Quasitoric braids are introduced by Manturov~\cite{Manturov}. 
For example, the braid as on the right-hand side in Figure~\ref{fig_pq-toric_braid} is a diagram of the $(4,3)$-quasitoric braid $(\sigma_1\sigma_2\sigma_3)(\sigma_1^{-1}\sigma_2^{-1}\sigma_3)(\sigma_1^{-1}\sigma_2\sigma_3)$. 
By the definition, a product of quasitoric braids is also a quasitoric braid. 
We remark that a $(n,m)$-quasitoric braid and $\beta(n,m)$ have the same projection to the $yz$-plane, and a diagram of a $(n,m)$-quasitoric braid is obtained from a diagram of the $(n,m)$-toric braid $\beta (n,m)$ by crossing change at some crossings. 

\begin{figure}[h]
\includegraphics[scale=0.76]{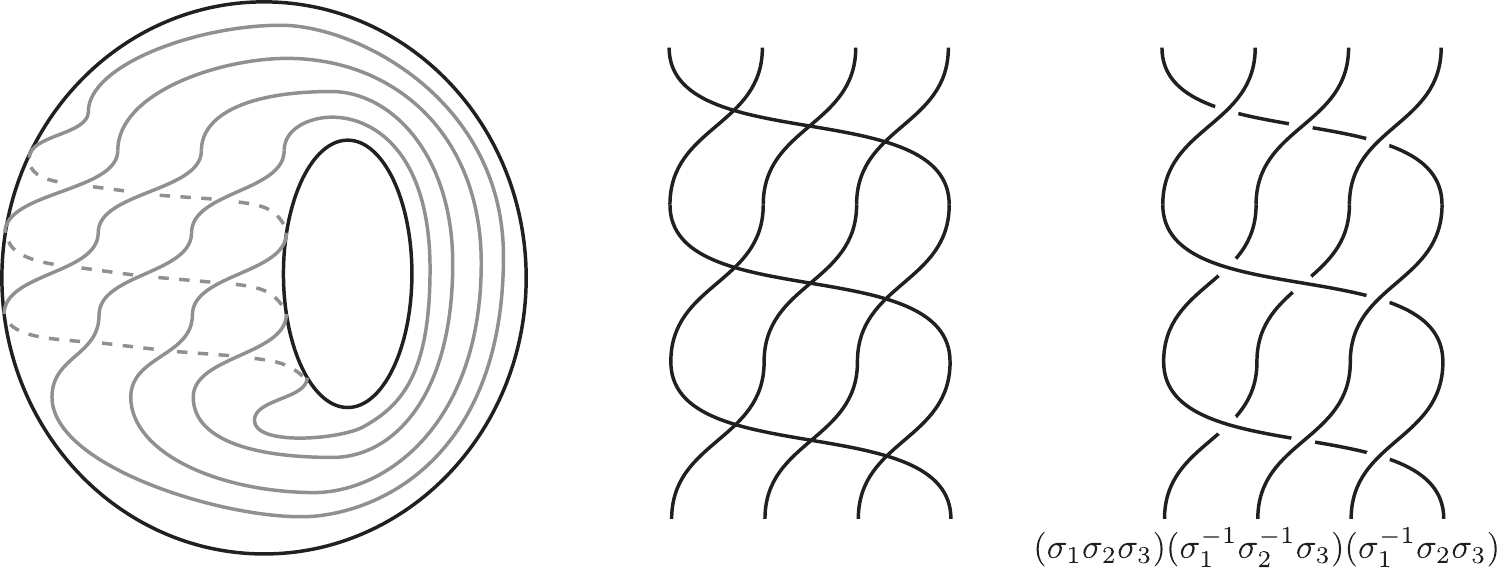}
\caption{The closure of $\beta (4,3)$, the shadow of $(4,3)$-toric braid $\beta (4,3)$, and a $(4,3)$-quasitoric braid.}\label{fig_pq-toric_braid}
\end{figure}

Let $QB_n$ be a subset of $B_n$ which consists of $n$-quasitoric braids. 
By Lemma~1 in~\cite{Manturov}, the inverse of a $n$-quasitoric braid is a product of $n$-quasitoric braids, namely, the subset $QB_n$ is a subgroup of $B_n$. 
We call $QB_n$ the \emph{quasitoric braid group} of $n$ strands. 
We regard the symmetric group $S_n$ of degree $n$ as the group of self-bijections on the set $\{ p_1, p_2, \dots , p_n \}$. 
Let 
\[
\Psi \colon B_n \to S_n
\]
be the surjective homomorphism which is defined by $\Psi (\sigma _i)=(i\ i+1)$ for $i\in \{ 1, 2, \dots , n\}$, and denote 
$\rho =\begin{pmatrix}
1 & 2 & \cdots & n-1 & n \\
2 & 3 & \cdots & n & 1 \\
\end{pmatrix}
\in S_n$. 
Since $\Psi (\sigma _1^{\varepsilon _1}\sigma _2^{\varepsilon _2}\cdots \sigma _{n-1}^{\varepsilon _{n-1}})=\rho $ for any $\varepsilon _1, \varepsilon _2, \dots , \varepsilon _{n-1}\in \{ \pm 1\}$, we have $\Psi (QB_n)=\left< \rho \right> \cong \Z _n$. 
The \emph{pure braid group} $PB_n$ is the kernel of the homomorphism $\Psi $. 
Since $PB_n$ is included in $QB_n$ by Lemma~4 in~\cite{Manturov}, we have the following exact sequence: 
\begin{eqnarray}\label{exact_quasi}
1\longrightarrow PB_n \longrightarrow QB_n \stackrel{\Psi }{\longrightarrow }\Z _n[\rho ]\longrightarrow 1. 
\end{eqnarray}

\subsection{Group extensions and presentations for groups}\label{section_presentation_exact}

To give a finite presentation of $QB_n$, for computation of its abelianization, via the exact sequence~(\ref{exact_quasi}), we review a relationship between a group extension and group presentations in this section from Section~3 in \cite{Hirose-Omori}. 
Let $G$ be a group and let $H=\bigl< X\mid R\bigr>$ and $Q=\bigl< Y\mid S\bigr>$ be presented groups which have the short exact sequence 
\[
1\longrightarrow H\stackrel{\iota }{\longrightarrow }G\stackrel{\nu }{\longrightarrow }Q\longrightarrow 1.
\]
We take a preimage $\widetilde{y}\in G$ of $y\in Q$ with respect to $\nu $ for each $y\in Q$. 
Then we put $\widetilde{X}=\{ \iota (x) \mid x\in X\} \subset G$ and $\widetilde{Y}=\{ \widetilde{y} \mid y\in Y\} \subset G$. 
Denote by $\widetilde{r}$ the word in $\widetilde{X}$ which is obtained from $r\in R$ by replacing each $x\in X$ by $\iota (x)$ and also denote by $\widetilde{s}$ the word in $\widetilde{Y}$ which is obtained from $s\in S$ by replacing each $y\in Y$ by $\widetilde{y}$. 
We note that $\widetilde{r}=1$ in $G$. 
Since $\widetilde{s}\in G$ is an element in $\ker \nu =\iota (H)$ for each $s\in S$, there exists a word $v_{s}$ in $\widetilde{X}$ such that $\widetilde{s}=v_{s}$ in $G$. 
Since $\iota (H)$ is a normal subgroup of $G$, for each $x\in X$ and $y\in Y$, there exists a word $w_{x,y}$ in $\widetilde{X}$ such that $\widetilde{y}\iota (x)\widetilde{y}^{-1}=w_{x,y}$ in $G$. 
The next lemma follows from an argument of the combinatorial group theory (for instance, see \cite[Proposition~10.2.1, p139]{Johnson}).

\begin{lem}\label{presentation_exact}
Under the situation above, the group $G$ admits the presentation with the generating set $\widetilde{X}\cup \widetilde{Y}$ and following defining relations:
\begin{enumerate}
 \item[(A)] $\widetilde{r}=1$ \quad for $r\in R$,
 \item[(B)] $\widetilde{s}=v_{s}$ \quad for $s\in S$,
 \item[(C)] $\widetilde{y}\iota (x)\widetilde{y}^{-1}=w_{x,y}$ \quad for $x\in X$ and $y\in Y$.
\end{enumerate} 
\end{lem}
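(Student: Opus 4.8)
The plan is to prove that the group $\widehat{G}$ presented by $\bigl< \widetilde{X}\cup\widetilde{Y}\mid (\mathrm{A}),(\mathrm{B}),(\mathrm{C})\bigr>$ is isomorphic to $G$. Since every relation of types (A), (B), (C) holds in $G$ by the choices made above, there is a homomorphism $\phi\colon\widehat{G}\to G$ carrying each generator to the element of $G$ denoted by the same symbol. It is surjective: $\widetilde{X}$ generates $\iota(H)$, the elements $\widetilde{y}$ hit a generating set of $Q\cong G/\iota(H)$, and these together generate $G$. So the whole content is the injectivity of $\phi$, which I would extract from the five lemma after exhibiting $\widehat{G}$ as an extension modelled on the given one.

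Let $N\leq\widehat{G}$ be the subgroup generated by $\widetilde{X}$. \emph{Step 1: $N$ is normal.} Relation (C) gives $\widetilde{y}\,\iota(x)\,\widetilde{y}^{-1}\in N$, and conjugating that identity by $\widetilde{y}^{-1}$ also puts $\widetilde{y}^{-1}\iota(x)\,\widetilde{y}$ in $N$; as $N$ is visibly stable under conjugation by the remaining generators $\iota(x)$ and their inverses, $N\trianglelefteq\widehat{G}$. \emph{Step 2: $\phi|_N\colon N\to\iota(H)$ is an isomorphism.} The relations (A) are exactly the relators $\widetilde{r}$ of $H$ rewritten in $\widetilde{X}$, so $x\mapsto\iota(x)$ defines a homomorphism $\psi\colon H\to N$, which is onto; since $\phi\circ\psi=\iota$ is injective, so is $\psi$, hence $\psi$ is an isomorphism and $\phi|_N=\iota\circ\psi^{-1}$ is an isomorphism of $N$ onto $\iota(H)$. \emph{Step 3: $\widehat{G}/N\cong Q$.} Forming this quotient (legitimate by Step 1) amounts to adjoining the relations $\iota(x)=1$; this trivializes every relation (A) and (C), whose two sides are words in $\widetilde{X}$, and converts each relation (B) into $\widetilde{s}=1$, so Tietze elimination of the now-trivial generators $\widetilde{X}$ leaves $\bigl<\widetilde{Y}\mid\widetilde{S}\bigr>\cong Q$. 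This isomorphism sends $\overline{\widetilde{y}}\mapsto y$, hence agrees with the map $\widehat{G}/N\to Q$ induced by $\nu\circ\phi$ (which kills $N$ because $\nu\iota=1$).

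It then remains to apply the five lemma to the morphism of short exact sequences
\[
\begin{array}{ccccccccc}
1 & \longrightarrow & N & \longrightarrow & \widehat{G} & \longrightarrow & \widehat{G}/N & \longrightarrow & 1\\
  &                 & \downarrow & & \downarrow & & \downarrow & & \\
1 & \longrightarrow & \iota(H) & \longrightarrow & G & \stackrel{\nu}{\longrightarrow} & Q & \longrightarrow & 1,
\end{array}
\]
whose vertical maps are $\phi|_N$, $\phi$, and the isomorphism of Step 3; the top row is exact by construction, the bottom by hypothesis, and the squares commute by Steps 2 and 3. Since the outer verticals are isomorphisms, so is $\phi$, which is the assertion. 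The only delicate points are remembering in Step 1 that (C) is asserted only for $y\in Y$ (hence the extra argument for conjugation by $\widetilde{y}^{-1}$) and keeping the Tietze bookkeeping straight in Step 3; the remainder is a formal diagram chase, as in \cite[Proposition~10.2.1]{Johnson}.
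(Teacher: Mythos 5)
The paper itself offers no proof of this lemma (it simply cites \cite[Proposition~10.2.1]{Johnson}), so your argument has to stand on its own. Its overall architecture --- the homomorphism $\phi\colon\widehat{G}\to G$, its surjectivity, the identification of $N=\langle\widetilde{X}\rangle$ with $H$, the computation of $\widehat{G}/N$, and the short five lemma --- is the standard route and is sound, \emph{except} at the one place you yourself call delicate: Step~1 as written does not prove normality of $N$. Conjugating the relation $\widetilde{y}\,\iota(x)\,\widetilde{y}^{-1}=w_{x,y}$ by $\widetilde{y}^{-1}$ yields $\iota(x)=\widetilde{y}^{-1}w_{x,y}\,\widetilde{y}$, i.e.\ it tells you that $\widetilde{y}^{-1}w_{x,y}\,\widetilde{y}\in N$; it says nothing about $\widetilde{y}^{-1}\iota(x)\,\widetilde{y}$ unless you already know every $\iota(x)$ is a product of the elements $w_{x',y}^{\pm1}$, i.e.\ unless conjugation by $\widetilde{y}$ maps $N$ \emph{onto} $N$. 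That surjectivity is exactly the content at stake and cannot come from formal manipulation of the relations alone: in $\langle a,b\mid bab^{-1}=a^2\rangle$ the same manipulation is available ($a=b^{-1}a^2b$), yet $\langle a\rangle$ is not normal. So ``hence $\widetilde{y}^{-1}\iota(x)\widetilde{y}\in N$'' is a non sequitur, and it is precisely where the extension hypothesis must enter.

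The gap is repairable within your own framework, but the order of the steps has to change. Do Step~2 first: $\psi\colon H\to N$, $x\mapsto\iota(x)$, is an isomorphism because $\phi\circ\psi=\iota$ is injective. Relations (C) show that conjugation by $\widetilde{y}$ carries $N$ into $N$, and under $\psi$ this map corresponds to the endomorphism $\beta_y$ of $H$ sending $x$ to the element of $H$ represented by $w_{x,y}$. Applying $\phi$ gives $\iota\circ\beta_y=c_{\widetilde{y}}\circ\iota$, where $c_{\widetilde{y}}$ is conjugation by $\widetilde{y}$ in $G$; since $\iota(H)=\ker\nu$ is normal in $G$, $c_{\widetilde{y}}$ restricts to an automorphism of $\iota(H)$, so $\beta_y=\iota^{-1}\circ c_{\widetilde{y}}\circ\iota$ is an automorphism of $H$. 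Hence conjugation by $\widetilde{y}$ is an automorphism of $N$, in particular surjective, which gives $\widetilde{y}^{-1}N\widetilde{y}\subseteq N$ and thus $N$ normal. With that substitution, your Step~3, the commutativity checks, and the five lemma go through verbatim, and the resulting proof is essentially the one in Johnson that the paper invokes.
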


\section{The abelianization for the quasitoric braid group}\label{section_abel}

The aim of this section is the calculation of the abelianization of the guasitoric braid group $QB_n$. 

\subsection{A finite presentation for the pure braid group}\label{section_pres_purebraid}

In this section, we give a finite presentation for the pure braid group $PB_n$ to obtain a finite presentation for $QB_n$ in the next section. 
Let $t_{i,j}\in B_n$ for $1\leq i<j\leq n$ be the $n$-braid which is described as the result of full-twisting the strings from $i$-th to $j$-th as in Figure~\ref{fig_full-twist-def}. 
We can see that $t_{i,j}\in PB_n$ and prove the following proposition in this section: 

\begin{figure}[h]
\includegraphics[scale=1.0]{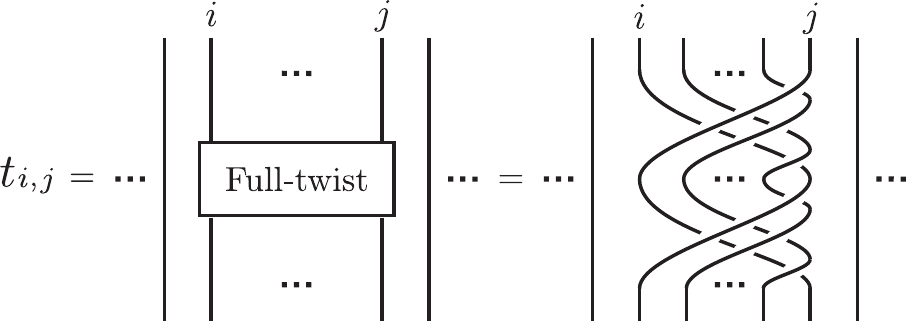}
\caption{The full-twist $t_{i,j}$ for $1\leq i<j\leq n$.}\label{fig_full-twist-def}
\end{figure}

\begin{prop}\label{prop_pres_PB}
The group $PB_n $ admits the presentation with generators $t_{i,j}$ for $1\leq i<j\leq n$ and the following defining relations: 
\begin{enumerate}
\item $t_{i,j}t_{k,l}=t_{k,l}t_{i,j}$ \quad for $j<k$, $k\leq i<j\leq l$, or $l<i$, 
\item $t_{j,m-1}^{-1}t_{k,m-1}t_{j,l-1}t_{i,k-1}t_{i,l-1}^{-1}=t_{i,l-1}^{-1}t_{i,k-1}t_{j,l-1}t_{k,m-1}t_{j,m-1}^{-1}$ \quad for $1\leq i<j<k<l<m\leq n$.\\ 
\end{enumerate}
\end{prop}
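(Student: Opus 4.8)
The plan is to obtain the presentation of $PB_n$ from the well-known Artin presentation of $B_n$ via the Reidemeister--Schreier rewriting process applied to the exact sequence $1\to PB_n\to B_n\xrightarrow{\Psi} S_n\to 1$, but in practice it is cleaner to proceed inductively using the Fadell--Neuwirth fibration $PB_n\to PB_{n-1}$ with kernel the free group $F_{n-1}$ generated by $t_{1,n},t_{2,n},\dots,t_{n-1,n}$ (the "last-string" generators). First I would record the geometric meaning of $t_{i,j}$ as the full twist on strands $i$ through $j$, and verify the standard identity $t_{i,j}=(\sigma_{j-1}\sigma_{j-2}\cdots\sigma_{i+1})\sigma_i^2(\sigma_{i+1}^{-1}\cdots\sigma_{j-1}^{-1})$, so that these elements indeed lie in $PB_n$ and generate it. The relation (1) is the standard commutation: strands twisting in "unlinked" or "nested" position commute, and these cases are exactly $j<k$, $k\le i<j\le l$, and $l<i$; I would check each by a picture / disjoint-support argument or by reducing to the known $A_{i,j}$-relations of the classical pure braid presentation.

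The substantive work is relation (2). The plan is to apply Lemma~\ref{presentation_exact} (the group-extension presentation lemma) to the split-type sequence $1\to F_{n-1}\to PB_n\to PB_{n-1}\to 1$: the generators $t_{i,n}$ are the $\iota(x)$, the generators $t_{i,j}$ with $j<n$ are lifts $\widetilde{y}$, relations of type (A) are vacuous since $F_{n-1}$ is free, relations of type (B) are the inductively-known relations of $PB_{n-1}$ lifted verbatim, and relations of type (C) are the conjugation formulas $\widetilde{t_{i,j}}\,t_{k,n}\,\widetilde{t_{i,j}}^{-1}=w$ expressing how a twist on an interior block acts on a last-strand generator. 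These conjugation formulas are exactly the classical pure-braid relations of the form $A_{r,s}A_{k,n}A_{r,s}^{-1}=(\text{word in }A_{k,n})$; the only real task is to rewrite them in the $t_{i,j}$-coordinates and then, after reindexing $n\mapsto$ various values, to massage the four-index instances into the palindromic five-index shape displayed in (2). I expect this bookkeeping — tracking which $t$'s appear with which exponents and confirming the symmetric "$uvwxy = \overline{y}\,\overline{x}\,w\,\overline{v}^{-1}$"-type form — to be the main obstacle, since the naive conjugation relations look asymmetric and one must conjugate/cancel to reach the stated form.

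Concretely, I would first nail down the $n=4$ base case by hand (where $PB_4$ is $F_3\rtimes PB_3$ and relation (2) has its unique instance $i,j,k,l,m=1,2,3,4,5$ — wait, that needs $n\ge5$, so the true base case with a type-(2) relation is $n=5$; for $n=3,4$ the presentation has only type-(1) relations and is checked directly). Then for the inductive step I would write out the type-(C) relations of Lemma~\ref{presentation_exact} explicitly, substitute the inductive presentation for type-(B), and perform Tietze transformations to eliminate redundancies and bring everything to the claimed normal form. Throughout I can lean on the classical presentation of $PB_n$ (Artin, or as in the Farb--Margalit text) as a consistency check: since both presentations present the same group, it suffices to exhibit mutually inverse sets of Tietze moves, so the proof reduces to a finite, if tedious, verification. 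The point worth emphasizing in the writeup is \emph{why} this particular form of the relations — rather than Artin's — is the convenient one, namely that in relation (2) the indices that get shifted ($m-1,l-1,k-1$) match exactly the interval-endpoint combinatorics needed downstream to compute $H_1(QB_n)$ via the $\Z_n$-action in the exact sequence~(\ref{exact_quasi}).
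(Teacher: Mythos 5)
Your overall strategy (apply Lemma~\ref{presentation_exact} to a suitable extension and push the relations through) is reasonable, but the specific decomposition you chose is set up incorrectly. The kernel of the Fadell--Neuwirth map $PB_n\to PB_{n-1}$ obtained by forgetting the last strand is free of rank $n-1$ on the \emph{Artin} generators $A_{1,n},\dots,A_{n-1,n}$ (strand $n$ looping around a single strand $i$), not on the interval twists $t_{1,n},\dots,t_{n-1,n}$: for $i<n-1$ the element $t_{i,n}$ maps to $t_{i,n-1}\neq 1$ in $PB_{n-1}$, so it does not even lie in the kernel. (Elements such as $t_{i,n}t_{i,n-1}^{-1}$, the point-pushes of the last puncture around nested loops, would be a correct free basis.) Consequently the roles you assign in Lemma~\ref{presentation_exact} --- ``the generators $t_{i,n}$ are the $\iota(x)$'' and the type-(C) relations are ``exactly the classical relations $A_{r,s}A_{k,n}A_{r,s}^{-1}=\cdots$'' --- do not hold as stated, and the inductive step would have to be rebuilt with a corrected kernel basis and a further change of generators back to the $t_{i,j}$. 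On top of this, the genuinely hard part of the statement --- producing the pentagonal relations (2) and showing that, together with (1), they imply all the conjugation (type-(C)) relations --- is exactly the ``bookkeeping'' you defer; as written the proposal contains no derivation of (2) at all, and your closing remark that ``since both presentations present the same group, it suffices to exhibit mutually inverse Tietze moves'' presupposes what is to be proved.

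For contrast, the paper does not induct on strands at all. It caps off $\partial D$ with a once-marked disk and uses the central extension $1\to \Z[t_{1,n}]\to PB_n\stackrel{\iota_\ast}{\to}\PM\to 1$, importing the presentation of the pure sphere mapping class group $\PM$ with generators $t_{i,j}$, $(i,j)\neq(1,n)$, and precisely the relations (1) and (2) from Proposition~3.1 of \cite{Hirose-Omori}; the commutation and pentagonal relations are checked to lift (disjointness of the curves $\gamma_{i,j}$, resp.\ Lemma~2.6 of \cite{Hirose-Omori}), and the only new type-(C) relations are the centrality relations $t_{i,j}t_{1,n}t_{i,j}^{-1}=t_{1,n}$, which are absorbed into (1). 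So the pentagon relations are not re-derived but quoted, which is exactly the step your plan leaves open. If you want an argument independent of \cite{Hirose-Omori}, your induction could be repaired along the lines above (correct kernel basis, explicit conjugation formulas, then Tietze moves to the $t_{i,j}$-form), but that verification is the substance of the proposition and must actually be carried out.
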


We remark that the presentation for $PB_n$ in Proposition~\ref{prop_pres_PB} is independently given by Namanya~\cite{Namanya}. 
Namanya~\cite{Namanya} gave this presentation by technical algebraic computations, on the other hand, we give the presentation above by using the presentation for the pure mapping class group of a 2-disk with marked points in Proposition~3.1 of~\cite{Hirose-Omori}. 

Let $D$ and $p_i\in D$ for $1\leq i\leq n$ be the 2-disk and the $n$ points in the interior of $D$ which are taken in Section~\ref{section_braid-quasitoric}. 
Then we take a 2-disk $D^\prime $ with a single marked point $p_{n+1}$ in the interior and regard a 2-sphere $S^2$ as the surface which is obtained from $D\sqcup D^\prime $ by gluing $\partial D^\prime $ to $\partial D$.  
The \emph{mapping class group} $\M $ of $S^2$ with marked points $p_1, \dots , p_{n+1}$ is the group of isotopy classes of orientation-preserving self-diffeomorphisms on $S^2$ fixing $\{ p_1, \dots , p_{n+1}\}$ setwise. 
We denote by $\Mb$ the group of isotopy classes of orientation-preserving self-diffeomorphisms on $D$ fixing $\{ p_1, \dots , p_{n}\}$ setwise and $\partial D$ point wise. 
The inclusion $\iota \colon D\hookrightarrow S^2$ and extensions to $S^2$ of diffeomorphisms on $D$ by the identity map induces a homomorphism 
\[
\iota _\ast \colon \Mb \to \M 
\]
whose kernel is generated by the right-handed Dehn twist $t_{\partial D}$ along $\partial D$ (we remark that this is not surjective).  
As a well-known fact, $B_n$ is isomorphic to $\Mb$ by the correspondence that $\sigma _i\in B_n$ maps to a anticlockwise half-twist which transpose $p_i$ and $p_{i+1}$ as in Figure~\ref{fig_sigma_i-new}. 
By this isomorphism, we identify $B_n$ with $\Mb $ and each element $b\in B_n$ with the image in $\Mb $ of $b$, respectively. 
Hence we have a homomorphism $\iota _\ast \colon B_n \to \M $ whose kernel is generated by $t_{1,n}$. 
 
\begin{figure}[h]
\includegraphics[scale=0.9]{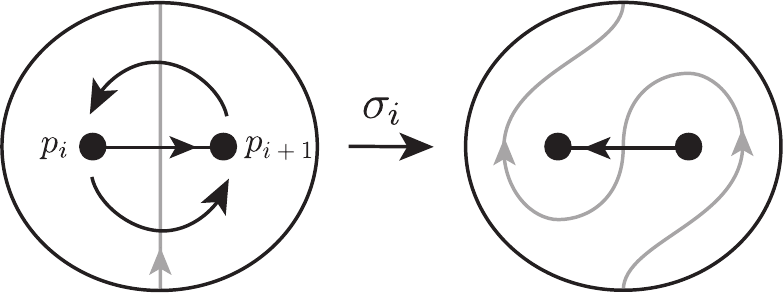}
\caption{The half-twist $\sigma _{i}$ for $1\leq i\leq n-1$.}\label{fig_sigma_i-new}
\end{figure}
 
The \emph{pure mapping class group} $\PM $ is the subgroup of $\M$ which consists of elements in $\M$ fixing $\{ p_1, \dots , p_{n+1}\}$ pointwise.  
By restricting $\iota _\ast $ to $PB_n$, we have the following exact sequence: 
\begin{eqnarray}\label{exact_pure}
1\longrightarrow \Z [t_{1,n}] \longrightarrow PB_n \stackrel{\iota _\ast }{\longrightarrow }\PM \longrightarrow 1. 
\end{eqnarray}
Let $\gamma _{i,j}$ for $1\leq i<j\leq n$ be a simple closed curve on the interior of $D\subset S^2$ as in Figure~\ref{fig_homeo_t_ij-delta}. 
We remark that the image $\iota _\ast (t_{i,j})$ for $1\leq i<j\leq n$ is the right-handed Dehn twist $t_{\gamma _{i,j}}$ along $\gamma _{i,j}$. 
We abuse notation and simply denote $t_{\gamma _{i,j}}=t_{i,j}$. 
The next proposition is given by Proposition~3.1 of~\cite{Hirose-Omori}. 

\begin{figure}[h]
\includegraphics[scale=1.0]{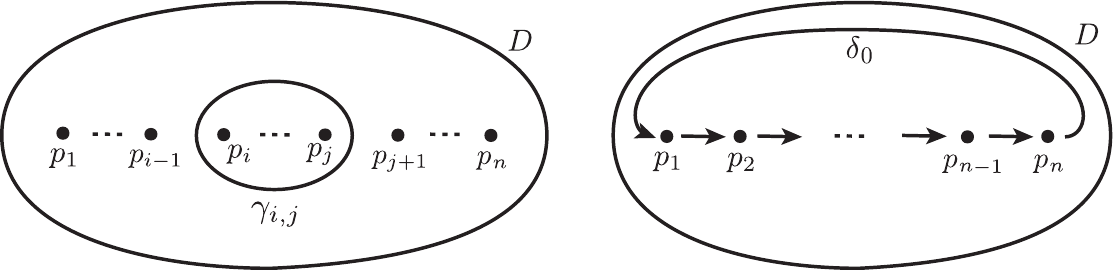}
\caption{Simple closed curve $\gamma _{i,j}$ for $1\leq i<j\leq n$ and a diffeomorphism $\delta _0$.}\label{fig_homeo_t_ij-delta}
\end{figure}

\begin{prop}\label{prop_pres_pmod}
The group $\PM $ admits the presentation with generators $t_{i,j}$ for $1\leq i<j\leq n$ with $(i,j)\not= (1,n)$, and the following defining relations: 
\begin{enumerate}
\item $t_{i,j}t_{k,l}=t_{k,l}t_{i,j}$ \quad for $j<k$, $k\leq i<j\leq l$, or $l<i$, 
\item $t_{j,m-1}^{-1}t_{k,m-1}t_{j,l-1}t_{i,k-1}t_{i,l-1}^{-1}=t_{i,l-1}^{-1}t_{i,k-1}t_{j,l-1}t_{k,m-1}t_{j,m-1}^{-1}$ \quad for $1\leq i<j<k<l<m\leq n$.\\ 
\end{enumerate}
\end{prop}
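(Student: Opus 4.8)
Proposition~\ref{prop_pres_pmod} is attributed in the text to Proposition~3.1 of~\cite{Hirose-Omori}, so I expect the paper to supply only that citation rather than a fresh argument; the result enters here purely as an input to the proof of Proposition~\ref{prop_pres_PB}. If one did want to prove it from scratch, the natural route is to start from a known presentation of $PB_n$ — the classical Artin presentation on the generators $A_{i,j}$, $1\le i<j\le n$ — together with the identification $\PM\cong PB_n/\langle t_{1,n}\rangle$ coming from the exact sequence~(\ref{exact_pure}): the full twist $t_{1,n}$ on all $n$ strands equals $\Delta^2$, hence is central in $B_n$, and it generates $\ker\iota_\ast$. So a presentation of $\PM$ is obtained from one of $PB_n$ by adjoining the relation $t_{1,n}=1$.

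The plan is then a change of generating set inside the quotient $\PM$, carried out by Tietze transformations. I would record explicit words expressing each full twist $t_{i,j}$ with $(i,j)\neq(1,n)$ as an ordered product of the Artin generators $A_{p,q}$, $i\le p<q\le j$, and conversely — using the relation $\Delta^2=1$ to handle $A_{1,n}$ — express each $A_{i,j}$ in terms of the $t_{p,q}$ with $(p,q)\neq(1,n)$. This shows $\{t_{i,j}:(i,j)\neq(1,n)\}$ generates $\PM$, and substituting these words into the Artin relations and simplifying should collapse the defining relations to exactly the commutation relations~(1) and the five-term relation~(2); equivalently, one must check that~(1) and~(2) imply every Artin relation after the latter is rewritten in the $t_{i,j}$.

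The step I expect to be the main obstacle is precisely this bookkeeping: verifying that the change of generators carries the full Artin relation set onto the list~(1)--(2) with nothing lost and nothing superfluous, i.e., that~(1) and~(2) form a \emph{complete} set of defining relations for $\PM$ in the new generators. A cleaner alternative — probably closer to what~\cite{Hirose-Omori} actually does — is to induct on $n$ using the Birman exact sequence obtained by forgetting the marked point $p_{n+1}$, whose kernel is a free group of rank $n-1$ generated by point-pushing loops, and to apply Lemma~\ref{presentation_exact} at each stage; there the work shifts to writing the point-pushing generators in terms of the $t_{i,j}$ and to recognizing the resulting conjugation relations~(C) of Lemma~\ref{presentation_exact} as consequences of~(1) and~(2). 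Either way the topological input is standard and the difficulty is purely combinatorial.
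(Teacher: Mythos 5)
You are correct: the paper gives no proof of Proposition~\ref{prop_pres_pmod} at all, stating only that it ``is given by Proposition~3.1 of~\cite{Hirose-Omori},'' exactly as you anticipated, so your identification of the citation as the paper's entire argument matches the source. Your fallback sketches (killing the central full twist $t_{1,n}$ in a presentation of $PB_n$, or inducting via the Birman exact sequence together with Lemma~\ref{presentation_exact}) are sensible outlines but are neither carried out in the paper nor required for it.
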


We remark that the relation~(2) in Proposition~\ref{prop_pres_pmod} is called a \emph{pentagonal relation} in~\cite{Hirose-Omori}.

\begin{proof}[The proof of Proposition~\ref{prop_pres_PB}]
We apply Lemma~\ref{presentation_exact} to the exact sequence~(\ref{exact_pure}) and the finite presentation for $\PM $ in Proposition~\ref{prop_pres_pmod}. 
%We regard the disk $D^\prime \subset S^2$ as a small disk neighborhood of $p_{n+1}$ in $S^2$. 
Since each simple closed curve $\gamma _{i,j}$ lies in the interior of $D\subset S^2$, if $\gamma _{i,j}$ and $\gamma _{k,l}$ are disjoint in $S^2=D\cup D^\prime $, then they are also disjoint in $D$. 
Hence the relation $t_{i,j}t_{k,l}=t_{k,l}t_{i,j}$ holds in $PB_n$ for $j<k$, $1\leq k\leq i<j\leq l\leq n$, or $l<i$. 
By Lemma~2.6 in~\cite{Hirose-Omori}, the relation 
\[
t_{j,m-1}^{-1}t_{k,m-1}t_{j,l-1}t_{i,k-1}t_{i,l-1}^{-1}=t_{i,l-1}^{-1}t_{i,k-1}t_{j,l-1}t_{k,m-1}t_{j,m-1}^{-1}
\]
for $1\leq i<j<k<l<m\leq n$ holds in the mapping class group of the regular neighborhood of $\gamma _{j,m-1}\cup \gamma _{k,m-1}\cup \gamma _{j,l-1}\cup \gamma _{i,k-1}\cup \gamma _{i,l-1}$ in $D\subset S^2$. 
Thus, pentagonal relations also hold in $PB_n$. 
By applying Lemma~\ref{presentation_exact}, % to the exact sequence~(\ref{exact_pure}) and the finite presentation for $\PM $ in Proposition~\ref{prop_pres_pmod}, 
$PB_n$ has the presentation whose generators are $t_{i,j}$ for $1\leq i<j\leq n$ and the following defining relations: 
\begin{enumerate}
\item[(A)] no relations, 
\item[(B)] 
\begin{enumerate}
\item $t_{i,j}t_{k,l}=t_{k,l}t_{i,j}$ \quad for ``$j<k$, $k\leq i<j\leq l$, or $l<i$'' and $(i,j)\not=(1,n)$, 
\item $t_{j,m-1}^{-1}t_{k,m-1}t_{j,l-1}t_{i,k-1}t_{i,l-1}^{-1}=t_{i,l-1}^{-1}t_{i,k-1}t_{j,l-1}t_{k,m-1}t_{j,m-1}^{-1}$ \quad for $1\leq i<j<k<l<m\leq n$, 
\end{enumerate}
\item[(C)] $t_{i,j}t_{1,n}t_{i,j}^{-1}=t_{1,n}$ for $1\leq i<j\leq n$ with $(i,j)\not= (1,n)$. 
\end{enumerate}
The relations~(B)~(a) and (C) coincide with the relation~(1) in Proposition~\ref{prop_pres_PB} and the relation~(B)~(b) coincides with the relation~(2) in Proposition~\ref{prop_pres_PB}, respectively. 
Therefore, we have completed the proof of Proposition~\ref{prop_pres_PB}.   
\end{proof}

In the last of this section, we give remarkable two corollaries. 
The next corollary follows from easy computations of abelianizations from presentations in Propositions~\ref{prop_pres_PB} and~\ref{prop_pres_pmod}. 

\begin{cor}\label{cor_abel_PB}
For $n\geq 3$, we have 
\begin{enumerate}
\item $H_1(PB_n; \Z )\cong \Z ^{\binom{n}{2}}$,
\item $H_1(\PM ; \Z )\cong \Z ^{\binom{n}{2}-1}$.
%\item $H_1(\mathrm{PMod}_{0,n}; \Z )\cong \Z ^{\binom{n-1}{2}}$.
\end{enumerate}
\end{cor}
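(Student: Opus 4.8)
The plan is to read both homology groups directly off the finite presentations in Propositions~\ref{prop_pres_PB} and~\ref{prop_pres_pmod}. Recall that if a group $G$ is presented by generators $x_1,\dots ,x_g$ and relators $r_1,\dots ,r_k$, then $H_1(G;\Z )$ is the cokernel of the homomorphism $\Z ^k\to \Z ^g$ whose $i$-th column is the exponent-sum vector of $r_i$; in particular, if every relator has exponent-sum vector $0$, then $H_1(G;\Z )\cong \Z ^g$. So the whole corollary reduces to checking that, in each of the two presentations, every defining relator abelianizes to the trivial relator.

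For $PB_n$ the presentation of Proposition~\ref{prop_pres_PB} has the $\binom{n}{2}$ generators $t_{i,j}$, $1\leq i<j\leq n$, so $H_1(PB_n;\Z )$ is a quotient of $\Z ^{\binom{n}{2}}$. The relators of type~(1) are commutators, hence have exponent-sum vector $0$. For a relator of type~(2) I would simply compare the two sides: on the left the three generators $t_{k,m-1},t_{j,l-1},t_{i,k-1}$ occur with exponent $+1$ and the two generators $t_{j,m-1},t_{i,l-1}$ occur with exponent $-1$, and on the right exactly the same five generators occur with exactly the same exponents; hence each pentagonal relator also has exponent-sum vector $0$. Therefore no relation survives in the abelianization and $H_1(PB_n;\Z )\cong \Z ^{\binom{n}{2}}$, which is part~(1). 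Part~(2) follows identically: Proposition~\ref{prop_pres_pmod} presents $\PM $ on the $\binom{n}{2}-1$ generators $t_{i,j}$ with $(i,j)\neq (1,n)$ and with relators again only of types~(1) and~(2), all of exponent-sum vector $0$ by the computation just made, so $H_1(\PM ;\Z )\cong \Z ^{\binom{n}{2}-1}$.

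The only point needing a word of care is that the omitted generator $t_{1,n}$ really does not occur among the relators in Proposition~\ref{prop_pres_pmod}, so that the generator count is genuinely $\binom{n}{2}-1$ with no extra surprises: in a type~(2) relator the constraint $1\leq i<j<k<l<m\leq n$ forces $j\geq 2$, $k\geq 3$, $l-1\leq n-2$, and $m-1\leq n-1$, so each of $t_{j,m-1},t_{k,m-1},t_{j,l-1},t_{i,k-1},t_{i,l-1}$ has either first index $>1$ or second index $<n$ and is therefore distinct from $t_{1,n}$; and the type~(1) relators only involve the generators explicitly listed in the proposition. Beyond this bookkeeping there is no obstacle at all: once one observes that the two sides of each pentagonal relation carry the same exponents on the same generators, both isomorphisms are immediate.
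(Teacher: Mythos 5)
Your proposal is correct and is exactly the argument the paper intends: the corollary is stated there to follow from ``easy computations of abelianizations from presentations'' in Propositions~\ref{prop_pres_PB} and~\ref{prop_pres_pmod}, and your verification that the commutation relators and the pentagonal relators all have zero exponent-sum vectors (plus the bookkeeping that $t_{1,n}$ never occurs in the relators for $\PM $) is precisely that computation spelled out.
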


The next corollary immediately follows from Propositions~\ref{prop_pres_PB}, \ref{prop_pres_pmod}, and~Corollary~\ref{cor_abel_PB}. 

\begin{cor}\label{cor_mini-gen_PB}
For $n\geq 3$, 
\begin{enumerate}
\item $PB_n$ is generated by $t_{i,j}$ for $1\leq i<j\leq n$ and this generating set is minimal,
\item $\PM $ is generated by $t_{i,j}$ for $1\leq i<j\leq n$ with $(i,j)\not =(1,n)$ and this generating set is minimal. 
\end{enumerate}
\end{cor}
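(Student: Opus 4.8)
The plan is to read the generating statement directly off the two presentations and then obtain minimality from the abelianization lower bound. First, Proposition~\ref{prop_pres_PB} presents $PB_n$ with generating set $\{t_{i,j}\mid 1\leq i<j\leq n\}$, a set of cardinality $\binom{n}{2}$, and Proposition~\ref{prop_pres_pmod} presents $\PM$ with generating set $\{t_{i,j}\mid 1\leq i<j\leq n,\ (i,j)\neq(1,n)\}$, of cardinality $\binom{n}{2}-1$. So in each case the set asserted in the corollary is indeed a generating set of the indicated size, and only minimality needs to be checked.

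Second, I would invoke the standard fact that for any group $G$ the minimal number of generators $d(G)$ satisfies $d(G)\geq d\bigl(H_1(G;\Z)\bigr)$, because the abelianization homomorphism carries any generating set of $G$ onto a generating set of $H_1(G;\Z)$. Third, for a finitely generated abelian group $A$ one has $d(A)\geq \dim_{\Q}(A\otimes_{\Z}\Q)$; in particular $d(\Z^{r})=r$, since if $\Z^{r}$ were generated by $r-1$ elements then so would be $\Q^{r}=\Z^{r}\otimes_{\Z}\Q$, a contradiction. Combining these two facts with Corollary~\ref{cor_abel_PB}, which gives $H_1(PB_n;\Z)\cong\Z^{\binom{n}{2}}$ and $H_1(\PM;\Z)\cong\Z^{\binom{n}{2}-1}$, yields $d(PB_n)\geq\binom{n}{2}$ and $d(\PM)\geq\binom{n}{2}-1$. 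Since these lower bounds agree with the sizes of the generating sets produced from the presentations, both generating sets are minimal, which proves parts (1) and (2).

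I do not expect a genuine obstacle here: the corollary is a formal consequence of the presentations in Propositions~\ref{prop_pres_PB} and~\ref{prop_pres_pmod} together with the abelianization computation in Corollary~\ref{cor_abel_PB}. The only ingredient beyond counting the listed generators is the elementary observation that passing to the abelianization cannot increase the minimal number of generators, combined with the fact that a free abelian group of rank $r$ is not generated by $r-1$ elements; both are routine, so the proof is short.
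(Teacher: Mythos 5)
Your proposal is correct and is exactly the argument the paper intends: the generating sets are read off the presentations in Propositions~\ref{prop_pres_PB} and~\ref{prop_pres_pmod}, and minimality follows from the lower bound on the number of generators given by the abelianizations in Corollary~\ref{cor_abel_PB}, which the paper leaves as an "immediate" consequence. You have merely spelled out the routine facts $d(G)\geq d\bigl(H_1(G;\Z)\bigr)$ and $d(\Z^{r})=r$ that the paper takes for granted.
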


\subsection{A finite presentation for the quasitoric braid group}\label{section_pres_quasitoric}

In this section, we give a finite presentation for the quasitoric braid group $QB_n$ to compute its abelianization in the next section. 
We denote the $(n,1)$-toric braid by
\[
\delta _0=\beta (n,1)=\sigma _1\sigma _2 \cdots \sigma _{n-1}\in B_n.
\] 
The $(n,1)$-toric braid $\delta _0$ is a braid as in Figure~\ref{fig_quasitoric_delta0} and satisfies that  
$\Psi (\delta _0)=\rho =\begin{pmatrix}
1 &  \cdots & n-1 & n \\
2 &  \cdots & n & 1 \\
\end{pmatrix}
\in S_n$. 
Under the identification $B_n\cong \Mb $ which is defined before Figure~\ref{fig_sigma_i-new}, $\delta _0$ is regarded as a diffeomorphism on $D$ which is described as the result of cyclic rotation of order $n$ as on the right-hand side in Figure~\ref{fig_homeo_t_ij-delta}.  %permutation       
First, the next lemma is immediately obtained from an argument in Figure~\ref{fig_t_ij-delta_conj}. %we prepare the following lemma. 

\begin{figure}[h]
\includegraphics[scale=1.0]{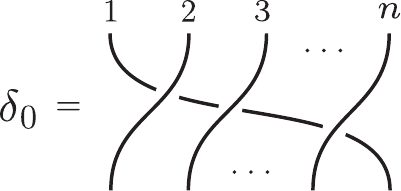}
\caption{The $(n,1)$-toric braid $\delta _0=\beta (n,1)=\sigma _1\sigma _2 \cdots \sigma _{n-1}\in B_n$.}\label{fig_quasitoric_delta0}
\end{figure}

\begin{lem}\label{lem_t_ij-delta_conj}
For $2\leq i<j\leq n$, the relation $\delta _0^{-1}t_{i,j}\delta _0=t_{i-1,j-1}$ holds in $QB_n$. 
\end{lem}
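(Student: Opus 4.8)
The plan is to verify the identity $\delta_0^{-1} t_{i,j} \delta_0 = t_{i-1,j-1}$ geometrically, using the identification $B_n \cong \Mb$ under which $\delta_0$ corresponds to the order-$n$ rotation of the disk $D$ (as pictured on the right-hand side in Figure~\ref{fig_homeo_t_ij-delta}) and $t_{i,j}$ corresponds to the Dehn twist $t_{\gamma_{i,j}}$ along the simple closed curve $\gamma_{i,j}$ enclosing the punctures $p_i, \dots, p_j$. First I would recall the general conjugation formula for Dehn twists: for any diffeomorphism $\varphi$ and any simple closed curve $\gamma$, one has $\varphi^{-1} t_\gamma \varphi = t_{\varphi^{-1}(\gamma)}$ (with the appropriate orientation convention so that the handedness is preserved, since $\delta_0$ is orientation-preserving). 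Thus the statement reduces to the purely topological claim that the rotation $\delta_0$ carries the curve $\gamma_{i-1,j-1}$ to the curve $\gamma_{i,j}$, i.e. $\delta_0^{-1}(\gamma_{i,j}) = \gamma_{i-1,j-1}$, for $2 \leq i < j \leq n$.

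The key step is then to check this curve identity, which is exactly the content of the argument indicated in Figure~\ref{fig_t_ij-delta_conj}. Since $\delta_0$ rotates the punctures cyclically by $p_k \mapsto p_{k+1}$ (indices mod $n$), its inverse sends $p_k \mapsto p_{k-1}$. The curve $\gamma_{i,j}$ is characterized up to isotopy (rel punctures) as a curve bounding a disk containing exactly $p_i, p_{i+1}, \dots, p_j$ and lying in the standard position shown in Figure~\ref{fig_homeo_t_ij-delta}; applying $\delta_0^{-1}$ sends this to a curve bounding a disk containing exactly $p_{i-1}, \dots, p_{j-1}$. The constraint $2 \leq i$ ensures that $i-1 \geq 1$, so the image disk does not "wrap around" past $p_1$ and the resulting curve is genuinely (isotopic to) the standard curve $\gamma_{i-1,j-1}$ rather than some curve that also sees the arc near $\partial D$; the constraint $j \leq n$ is automatic. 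One should note here that $\delta_0^{-1}$ is a rotation of the whole disk, so it respects the radial/standard position of these curves, making the isotopy to $\gamma_{i-1,j-1}$ transparent from the picture.

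The main obstacle, such as it is, is bookkeeping rather than mathematics: one must be careful that the relation is asserted in $QB_n \subset B_n$, so $t_{i,j}$ and $\delta_0$ are genuine braids (elements of $\Mb$), not their images in $\M$ where the central twist $t_{1,n}$ dies. Since $2 \leq i < j \leq n$, neither $\gamma_{i,j}$ nor $\gamma_{i-1,j-1}$ is the boundary-parallel curve $\gamma_{1,n}$, so no collapsing ambiguity arises and the equality genuinely holds at the level of $B_n$; the rotation $\delta_0$ need not fix $\partial D$ pointwise, but conjugation by it is still well-defined in $B_n$ because $\delta_0 \in B_n$, and the curve computation above is insensitive to the behavior near $\partial D$. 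I would therefore simply record the curve identity $\delta_0^{-1}(\gamma_{i,j}) = \gamma_{i-1,j-1}$ with reference to Figure~\ref{fig_t_ij-delta_conj}, invoke the conjugation formula for Dehn twists, and conclude.
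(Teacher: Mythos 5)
Your argument is correct and is essentially the paper's own: the paper's proof is precisely the pictorial argument of Figure~\ref{fig_t_ij-delta_conj}, namely that under the identification $B_n\cong \Mb$ the element $\delta_0$ acts (away from a boundary annulus) as the order-$n$ rotation, which carries $\gamma_{i,j}$ to $\gamma_{i-1,j-1}$ for $2\leq i<j\leq n$, so the Dehn twist conjugation formula gives $\delta_0^{-1}t_{i,j}\delta_0=t_{i-1,j-1}$. Your additional remarks about the non-wrapping condition $i\geq 2$ and the irrelevance of the behavior near $\partial D$ are accurate and only make explicit what the figure leaves implicit.
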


\begin{figure}[h]
\includegraphics[scale=0.8]{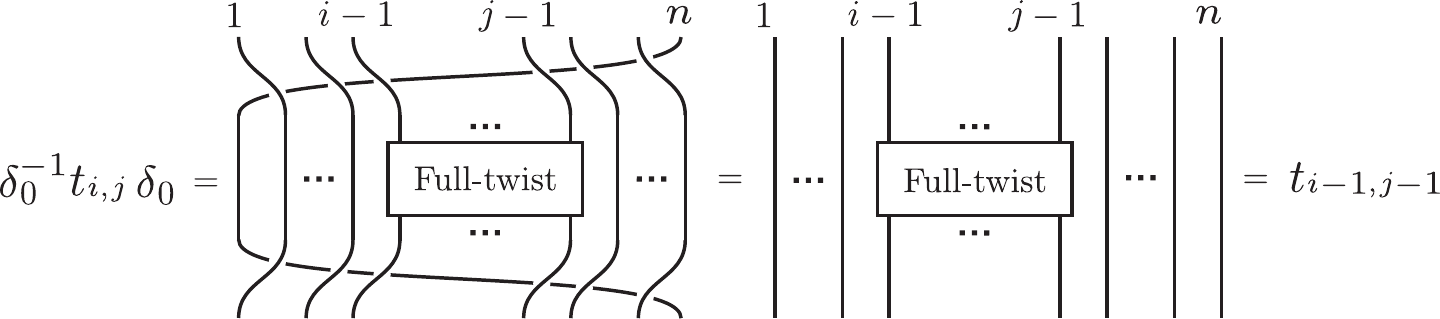}
\caption{The relation $\delta _0^{-1}t_{i,j}\delta _0=t_{i-1,j-1}$ for $2\leq i<j\leq n$.}\label{fig_t_ij-delta_conj}
\end{figure}

From here,  we regard $t_{i,i}$ for $1\leq i\leq n$ as the trivial element in $B_n$.  
We give the following proposition in this section: 

\begin{prop}\label{prop_pres_QB}
The group $QB_n $ admits the presentation with generators $\delta _0$ and $t_{i,j}$ for $1\leq i<j\leq n$, and the following defining relations: 
\begin{enumerate}
\item $t_{i,j}t_{k,l}=t_{k,l}t_{i,j}$ \quad for $j<k$, $k\leq i<j\leq l$, or $l<i$, 
\item $t_{j,m-1}^{-1}t_{k,m-1}t_{j,l-1}t_{i,k-1}t_{i,l-1}^{-1}=t_{i,l-1}^{-1}t_{i,k-1}t_{j,l-1}t_{k,m-1}t_{j,m-1}^{-1}$ \quad for $1\leq i<j<k<l<m\leq n$,
\item $\delta _0^n=t_{1,n}$,
\item $\delta _0t_{i,j}\delta _0^{-1}=
\left\{ \begin{array}{ll}
 t_{i+1,j+1} & \text{for }  j<n,   \\
 t_{1,n} & \text{for } (i,j)=(1,n), \\
 t_{2,n}^{-1}t_{1,i}^{-1}t_{2,i}t_{i+1,n}t_{1,n} & \text{for }j=n \text{ and }i>1.
 \end{array} \right. $
\end{enumerate}
\end{prop}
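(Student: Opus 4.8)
The plan is to apply Lemma~\ref{presentation_exact} to the short exact sequence~(\ref{exact_quasi}), taking $H=PB_n$ with the presentation of Proposition~\ref{prop_pres_PB} (generators $t_{i,j}$, relations~(1) and~(2)) and $Q=\Z _n[\rho ]=\left< \rho \mid \rho ^n\right> $. For the lift $\widetilde{\rho }\in QB_n$ of $\rho $ I would take $\widetilde{\rho }=\delta _0=\sigma _1\cdots \sigma _{n-1}$: it lies in $QB_n$, being the $(n,1)$-toric braid $\beta (n,1)$, and satisfies $\Psi (\delta _0)=\rho $. Lemma~\ref{presentation_exact} then outputs a presentation of $QB_n$ on the generating set $\{ t_{i,j}\mid 1\le i<j\le n\} \cup \{ \delta _0\} $, so the task reduces to identifying its relation families~(A), (B) and~(C) with the relations~(1)--(4) in the statement.

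The relations~(A) are just the relations~(1) and~(2) of Proposition~\ref{prop_pres_PB} (using the identification of $\iota (t_{i,j})$ with $t_{i,j}$), so they become relations~(1) and~(2) here. The group $Q=\Z _n$ has the single relator $\rho ^n$, so for~(B) I must produce a word $v$ in the $t_{i,j}$ with $\delta _0^{\,n}=v$ in $QB_n$; the right choice is $v=t_{1,n}$, since $(\sigma _1\cdots \sigma _{n-1})^n$ is the full twist on all $n$ strands and this is exactly $t_{1,n}$. This is relation~(3), and it also recovers the centrality of $t_{1,n}$.

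It remains to compute $\delta _0 t_{i,j}\delta _0^{-1}$ for each $1\le i<j\le n$, which gives the relations~(C). For $j<n$ this is immediate from Lemma~\ref{lem_t_ij-delta_conj}: rewriting $\delta _0^{-1}t_{i,j}\delta _0=t_{i-1,j-1}$ ($2\le i<j\le n$) as $\delta _0 t_{i-1,j-1}\delta _0^{-1}=t_{i,j}$ and reindexing gives $\delta _0 t_{i,j}\delta _0^{-1}=t_{i+1,j+1}$ whenever $j+1\le n$, that is for $j<n$. The case $(i,j)=(1,n)$ is immediate from relation~(3), or from centrality of the full twist. The remaining case $j=n$ with $i>1$ is the heart of the proof: since $\delta _0$ acts on $D$ as the order-$n$ rotation, $\delta _0 t_{i,n}\delta _0^{-1}$ is the Dehn twist along the image curve $\delta _0(\gamma _{i,n})$, which encloses exactly $\{ p_1,p_{i+1},\dots ,p_n\} $, that is, it winds around the omitted points $p_2,\dots ,p_i$. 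To rewrite this twist in the standard generators I would invoke the lantern relation (valid in $B_n$, hence in $PB_n$) for the four-holed sphere obtained by deleting small disks around $p_1$, the block $\{ p_2,\dots ,p_i\} $, the block $\{ p_{i+1},\dots ,p_n\} $ and $p_{n+1}$: its boundary twists are (the trivial twist around $p_1$), $t_{2,i}$, $t_{i+1,n}$ and $t_{1,n}$, and its three interior curves are $\gamma _{1,i}$, $\delta _0(\gamma _{i,n})$ and $\gamma _{2,n}$; solving the lantern relation for $t_{\delta _0(\gamma _{i,n})}$ and simplifying using the commutations among nested curves yields $t_{2,n}^{-1}t_{1,i}^{-1}t_{2,i}t_{i+1,n}t_{1,n}$, which is relation~(4) in this case. (Alternatively this can be done by a direct picture of the braid $\delta _0 t_{i,n}\delta _0^{-1}$, using that $\sigma _k$ commutes with $t_{i,n}$ for $i\le k\le n-1$ to reduce the computation to conjugation by the prefix $\sigma _1\cdots \sigma _{i-1}$.)

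Putting it together, the presentation produced by Lemma~\ref{presentation_exact} has relation families~(A) $=$ (1),(2), (B) $=$ (3) and~(C) $=$ (4), which is precisely the presentation claimed in Proposition~\ref{prop_pres_QB}. The main obstacle is the last case of relation~(4): correctly identifying the wrap-around curve $\delta _0(\gamma _{i,n})$, matching it with a lantern, and pinning down the exact word on the right-hand side, including the placement of $t_{2,n}^{-1}$ and $t_{1,i}^{-1}$ and the use of the convention $t_{i,i}=1$ to absorb the degenerate cases $i=2$ and $i=n-1$; everything else is bookkeeping.
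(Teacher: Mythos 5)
Your proposal follows the same route as the paper: apply Lemma~\ref{presentation_exact} to the sequence~(\ref{exact_quasi}) with the presentation of $PB_n$ from Proposition~\ref{prop_pres_PB} and the lift $\delta _0$ of $\rho $, handle relation~(4) for $j<n$ via Lemma~\ref{lem_t_ij-delta_conj} and for $(i,j)=(1,n)$ via centrality of the full twist, and resolve the remaining case $j=n$, $i>1$ by the same lantern relation $t_{1,i}t_{2,n}\cdot \delta _0t_{i,n}\delta _0^{-1}=t_{2,i}t_{i+1,n}t_{1,n}$ for the wrap-around curve $\delta _0(\gamma _{i,n})$. The identification of the four boundary curves and three interior curves, and the resulting word $t_{2,n}^{-1}t_{1,i}^{-1}t_{2,i}t_{i+1,n}t_{1,n}$, agree with the paper's Figure-based argument, so the proof is correct and essentially identical.
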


\begin{proof}
We apply Lemma~\ref{presentation_exact} to the exact sequence~(\ref{exact_quasi}), that is 
\[
1\longrightarrow PB_n \longrightarrow QB_n \stackrel{\Psi }{\longrightarrow }\Z _n[\rho ]\longrightarrow 1, 
\] 
and the finite presentation for $PB_n$ in Proposition~\ref{prop_pres_PB}. 
Since $\Psi (\delta _0)=\rho $ and $\delta _0^n=t_{1,n}$, the relation $\rho ^n=1$ in $\Z _n[\rho ]$ lifts to the relation $\delta _0^n=t_{1,n}$ in $QB_n$. 
Thus, by applying Lemma~\ref{presentation_exact}, %to the exact sequence~(\ref{exact_pure}) and the finite presentation for $\PM $ in Proposition~\ref{prop_pres_pmod}, 
$PB_n$ has the presentation whose generators are $\delta _0$ and $t_{i,j}$ for $1\leq i<j\leq n$ and the following defining relations: 
\begin{enumerate}
\item[(A)]  
\begin{enumerate}
\item $t_{i,j}t_{k,l}=t_{k,l}t_{i,j}$ \quad for $j<k$, $k\leq i<j\leq l$, or $l<i$, 
\item $t_{j,m-1}^{-1}t_{k,m-1}t_{j,l-1}t_{i,k-1}t_{i,l-1}^{-1}=t_{i,l-1}^{-1}t_{i,k-1}t_{j,l-1}t_{k,m-1}t_{j,m-1}^{-1}$ \quad for $1\leq i<j<k<l<m\leq n$, 
\end{enumerate}
\item[(B)] $\delta _0^n=t_{1,n}$,
\item[(C)] $\delta _0t_{i,j}\delta _0^{-1}=w_{t_{i,j},\delta _0}$ for $1\leq i<j\leq n$, 
\end{enumerate}
where $w_{t_{i,j},\delta _0}$ is some product of $t_{k,l}$ for $1\leq k<l\leq n$. 
The generators and the relations~(A)~(a), (b), and (B) of this presentation coincide with the generators and the relations~(1), (2), and (3) in Proposition~\ref{prop_pres_QB}, respectively. 
Hence, it is enough for completing the proof of Proposition~\ref{prop_pres_QB} to prove that the relation~(C) above coincides with the relation~(4) in Proposition~\ref{prop_pres_QB}.

In the case of $j<n$, we can see that $\delta _0t_{i,j}\delta _0^{-1}=t_{i+1,j+1}$ by Lemma~\ref{lem_t_ij-delta_conj}, and in the case of $(i,j)=(1,n)$, we have $\delta _0t_{1,n}\delta _0^{-1}=t_{1,n}$ since $t_{1,n}$ commutes with all elements in $B_n$. 
Hence we have $w_{t_{i,j},\delta _0}=t_{i+1,j+1}$ for $j<n$ and $w_{t_{i,j},\delta _0}=t_{1,n}$ for $(i,j)=(1,n)$, and the relations~(C) in these cases coincide with the relations~(4) in Proposition~\ref{prop_pres_QB} for the same cases.  

In the case of $j=n$ and $i>1$, the conjugation $\delta _0t_{i,j}\delta _0^{-1}=\delta _0t_{i,n}\delta _0^{-1}$ is the right-handed Dehn twist along $\delta _0(\gamma _{i,n})$ that is a simple closed curve as in Figure~\ref{fig_lantern_rel-no1}. 
By a lantern relation as in Figure~\ref{fig_lantern_rel-no1}, we have
\[
t_{1,i}t_{2,n}\cdot \delta _0t_{i,n}\delta _0^{-1}=t_{2,i}t_{i+1,n}t_{1,n}\ \Longleftrightarrow \ \delta _0t_{i,n}\delta _0^{-1}=t_{2,n}^{-1}t_{1,i}^{-1}t_{2,i}t_{i+1,n}t_{1,n}.
\]
Thus, in this case, we have $w_{t_{i,n},\delta _0}=t_{2,n}^{-1}t_{1,i}^{-1}t_{2,i}t_{i+1,n}t_{1,n}$ and the relation~(C) also coincide with the relation~(4) in Proposition~\ref{prop_pres_QB}. 
Therefore, we have completed the proof of Proposition~\ref{prop_pres_QB}.   
\end{proof}

\begin{figure}[h]
\includegraphics[scale=0.8]{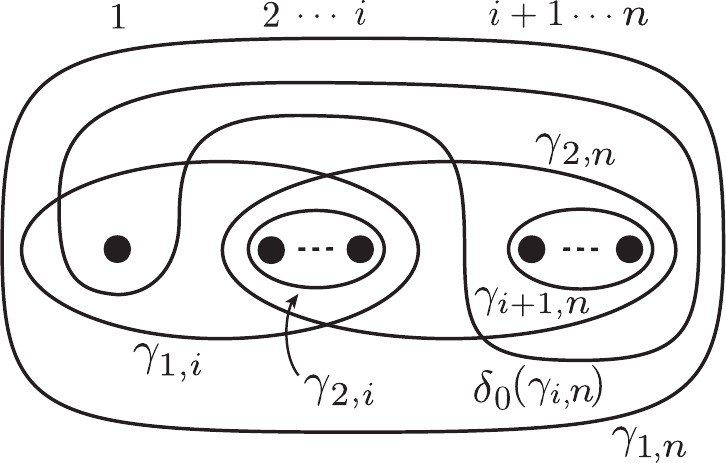}
\caption{Simple closed curves which are appeared in the lantern relation $t_{1,i}t_{2,n}\cdot \delta _0t_{i,n}\delta _0^{-1}=t_{2,i}t_{i+1,n}t_{1,n}$ for $2\leq i\leq n-1$.}\label{fig_lantern_rel-no1}
\end{figure}

\subsection{The proof of Theorem~\ref{thm_abel}}\label{section_abel_proof}

In this section, we calculate the abelianization of $QB_n$ by using the presentation in Proposition~\ref{prop_pres_QB}. 
For conveniences, we abuse notation and denote an element of $QB_n$ and its homology class in $\HQ $ by the same symbol in this section. 
First, the next lemma is immediately obtained from Lemma~\ref{lem_t_ij-delta_conj}. 

\begin{lem}\label{lem_proof_abel0}
For $1\leq i<j\leq n$, the relation $t_{i,j}=t_{1,j-i+1}$ holds in $\HQ $. 
\end{lem}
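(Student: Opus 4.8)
The plan is to prove the relation $t_{i,j}=t_{1,j-i+1}$ in $\HQ$ by descending induction on $i$, using the conjugation relation of Lemma~\ref{lem_t_ij-delta_conj} after passing to the abelianization. The base case $i=1$ is the tautology $t_{1,j}=t_{1,j}$, so there is nothing to prove there. For the inductive step, suppose $2\le i<j\le n$ and that the claim holds for the index $i-1$, i.e.\ $t_{i-1,j-1}=t_{1,(j-1)-(i-1)+1}=t_{1,j-i+1}$ in $\HQ$.

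The key observation is that Lemma~\ref{lem_t_ij-delta_conj} gives $\delta_0^{-1}t_{i,j}\delta_0=t_{i-1,j-1}$ in $QB_n$ for all $2\le i<j\le n$. In the abelianization $\HQ$ the conjugation action is trivial, so this relation becomes the equality $t_{i,j}=t_{i-1,j-1}$ of homology classes. Combining this with the inductive hypothesis $t_{i-1,j-1}=t_{1,j-i+1}$ yields $t_{i,j}=t_{1,j-i+1}$ in $\HQ$, completing the induction. Equivalently, one may simply iterate Lemma~\ref{lem_t_ij-delta_conj} exactly $i-1$ times (conjugating by $\delta_0$) and then abelianize: in $QB_n$ one has $\delta_0^{-(i-1)}t_{i,j}\delta_0^{\,i-1}=t_{1,j-i+1}$ (the second index stays $\le n$ since $j-i+1\le n$, so the curves involved never wrap around and the simple ``shift'' formula of Lemma~\ref{lem_t_ij-delta_conj} applies at every stage), and passing to $\HQ$ kills the conjugating factor.

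There is essentially no obstacle here: the only point to check is that the hypothesis $2\le k<l\le n$ of Lemma~\ref{lem_t_ij-delta_conj} is met at every step of the iteration, which holds because after $s$ conjugations the index pair is $(i-s,\,j-s)$ with $i-s\ge 1$ and $j-s< j\le n$, and we stop exactly when the first index reaches $1$. Thus the statement follows immediately once the conjugation relation is interpreted in the abelian group $\HQ$.
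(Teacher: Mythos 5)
Your argument is correct and is exactly the route the paper intends: iterate the conjugation relation $\delta_0^{-1}t_{i,j}\delta_0=t_{i-1,j-1}$ of Lemma~\ref{lem_t_ij-delta_conj} and observe that conjugation becomes trivial in $\HQ$. The paper treats this as immediate from Lemma~\ref{lem_t_ij-delta_conj}, and your induction just spells out the same reasoning, including the harmless index check.
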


The next two lemmas give relations in $\HQ $ which are obtained from the relations~(4) in Proposition~\ref{prop_pres_QB} for $j=n$ and $i>1$. 
%To prove Theorem~\ref{thm_abel}, we prepare the following lemma.  

\begin{lem}\label{lem_proof_abel1}
When $n\geq 3$ is odd, in $H_1(QB_n;\Z )$, the relations~(4) in Proposition~\ref{prop_pres_QB} for $j=n$ and $i>1$ are equivalent to the following relations: 
\begin{enumerate}
\item $t_{1,n-i}=t_{1,i-1}^{-1}t_{1,i}t_{1,n-1}t_{1,n-i+1}\delta _0^{-n}$ \quad for $2\leq i\leq \frac{n-1}{2}$, 
\item $t_{1,n-1}^n\delta _0^{-n(n-2)}=1$. 
\end{enumerate}
\end{lem}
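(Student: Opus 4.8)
The plan is to start from the relation~(4) in Proposition~\ref{prop_pres_QB} for $j=n$ and $i>1$, namely
\[
\delta _0t_{i,n}\delta _0^{-1}=t_{2,n}^{-1}t_{1,i}^{-1}t_{2,i}t_{i+1,n}t_{1,n},
\]
and push it into the abelianization $\HQ $, where every commutator is trivial. First I would rewrite both sides using Lemma~\ref{lem_proof_abel0}, which identifies $t_{k,l}$ with $t_{1,l-k+1}$ in $\HQ $; thus $t_{i,n}\mapsto t_{1,n-i+1}$, $t_{2,n}\mapsto t_{1,n-1}$, $t_{1,i}\mapsto t_{1,i}$, $t_{2,i}\mapsto t_{1,i-1}$, and $t_{i+1,n}\mapsto t_{1,n-i}$. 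On the left-hand side, $\delta _0 t_{1,n-i+1}\delta _0^{-1}$ is still genuinely a conjugate in $QB_n$, not equal to $t_{1,n-i+1}$ in $\HQ$ a priori, so I must express $\delta_0 t_{i,n}\delta_0^{-1}$ in terms of the generators before abelianizing — but that is exactly what relation~(4) does. So after abelianizing, the relation becomes
\[
t_{1,n-i+1}=t_{1,n-1}^{-1}t_{1,i}^{-1}t_{1,i-1}t_{1,n-i}t_{1,n},
\]
which I would then solve for $t_{1,n-i}$ to obtain
\[
t_{1,n-i}=t_{1,i-1}^{-1}t_{1,i}t_{1,n-1}t_{1,n-i+1}t_{1,n}^{-1},
\]
and finally use relation~(3), $\delta _0^n=t_{1,n}$, to replace $t_{1,n}^{-1}$ by $\delta _0^{-n}$, giving assertion~(1). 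The range $2\le i\le \frac{n-1}{2}$ arises because for larger $i$ the index $n-i$ becomes $\le i$, so that relation is either a restatement of a previously obtained one (using the symmetry $t_{1,k}=t_{1,k}$ and the fact that $i\mapsto n-i$ is an involution on the relevant set) or the self-paired middle case $i=n-i$, which I would treat separately: when $n$ is odd there is no fixed point of $i\mapsto n-i$ among integers, so the remaining single relation is the one with $2i=n+1$ sitting at the boundary, and I would check it collapses to relation~(2).

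For assertion~(2), the idea is to iterate. Summing (in additive notation) the relations in~(1) over $i=2,\dots,\frac{n-1}{2}$, or equivalently composing them, should telescope: the terms $t_{1,n-i}$ on the left and $t_{1,n-i+1}$ on the right overlap so that most of the $t_{1,*}$ contributions cancel, leaving a relation involving only $t_{1,n-1}$, the leftover boundary terms $t_{1,i}$ and $t_{1,i-1}$ (which also telescope against each other), and a power of $\delta _0^{-n}$. Carefully bookkeeping the number of summands — there are $\frac{n-3}{2}$ of them — and the leftover indices should produce $t_{1,n-1}^{n}\delta_0^{-n(n-2)}=1$, after also invoking relation~(3) once more to convert any stray $t_{1,n}$. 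The exponent $n(n-2)$ strongly suggests that the count works out as (number of $\delta_0^{-n}$ factors)$\times$(something like $2$ or $n$), so I would track exponents as integers throughout rather than trusting the telescoping blindly.

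The main obstacle will be the exponent bookkeeping in step~(2): making sure the telescoping of the $t_{1,k}$'s is complete (i.e.\ that nothing but $t_{1,n-1}$ and powers of $\delta_0^n$ survive) and that the accumulated power of $\delta_0^{-n}$ is exactly $n(n-2)$ and of $t_{1,n-1}$ is exactly $n$. A secondary subtlety is confirming that the relations in~(1) for $i>\frac{n-1}{2}$ genuinely add no new information in $\HQ $ beyond those for $i\le \frac{n-1}{2}$ together with~(2); I expect this to follow by applying the same derivation to the pair $(i,n-i)$ and observing the two resulting equations are related by the involution and hence equivalent modulo the already-derived relations, but it needs to be stated cleanly. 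Everything else — substituting Lemma~\ref{lem_proof_abel0}, using relation~(3) to trade $t_{1,n}$ for $\delta_0^n$, and rearranging — is routine once the indexing is fixed, so I would present those compactly and spend the words on the telescoping count.
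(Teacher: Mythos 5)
Your first paragraph does follow the paper's route: abelianize so that $\delta_0t_{i,n}\delta_0^{-1}=t_{i,n}$ in $\HQ$, rewrite with Lemma~\ref{lem_proof_abel0}, trade $t_{1,n}$ for $\delta_0^{n}$ via relation~(3), and note that the cases $\frac{n+3}{2}\le i\le n-1$ reproduce the cases $2\le i\le\frac{n-1}{2}$. A minor slip there: the relevant pairing of indices is $i\mapsto n-i+1$, not $i\mapsto n-i$; for $n$ odd it \emph{does} have a fixed point, namely $i=\frac{n+1}{2}$, which is the exceptional case you correctly isolate via $2i=n+1$.

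The genuine gap is in your derivation of relation~(2). Summing the relations~(1) over $i=2,\dots ,\frac{n-1}{2}$ cannot produce~(2): writing $x_k=t_{1,k}$ and $d=\delta_0$ additively, the telescoping yields only
\[
x_{\frac{n+1}{2}}=x_{\frac{n-1}{2}}+\tfrac{n-1}{2}\,x_{n-1}-\tfrac{n-3}{2}\,n\,d,
\]
an identity expressing $x_{\frac{n+1}{2}}$ in terms of the remaining generators; the boundary terms $x_{\frac{n-1}{2}}$ and $x_{\frac{n+1}{2}}$ do not disappear, and no torsion relation can follow from~(1) alone, since the relations~(1) merely solve for $t_{1,j}$ with $\frac{n+1}{2}\le j\le n-2$ and are satisfied in a free abelian group on $t_{1,2},\dots ,t_{1,\frac{n-1}{2}},t_{1,n-1},\delta_0$ in which~(2) fails. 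Relation~(2) comes from the fixed-point case $i=\frac{n+1}{2}$ of the relations~(4), which in $\HQ$ reads $t_{1,\frac{n-1}{2}}^{2}=t_{1,\frac{n+1}{2}}^{2}t_{1,n-1}\delta_0^{-n}$ and does not ``collapse'' on its own; you must substitute the telescoped expression for $t_{1,\frac{n+1}{2}}$ into it, which then gives $t_{1,n-1}^{n}\delta_0^{-n(n-2)}=1$. This substitution step (phrased as a chain of equivalences, so that the relations~(1) and~(2) are equivalent to, not merely implied by, the relations~(4) with $j=n$, $i>1$) is exactly what the paper's proof supplies and what your plan, as written, omits.
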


Since $t_{1,1}$ is trivial in $QB_n$, we remark that the relation~(1) in Lemma~\ref{lem_proof_abel1} for $i=2$ is equivalent to the relation $t_{1,n-2}=t_{1,2}t_{1,n-1}^2\delta _0^{-n}$. 

\begin{proof}
First, by the relation~(3) in Proposition~\ref{prop_pres_QB}, the relation~(4) in Proposition~\ref{prop_pres_QB} for $j=n$ and $i>1$ is equivalent to the relation $t_{i,n}=t_{2,n}^{-1}t_{1,i}^{-1}t_{2,i}t_{i+1,n}\delta _0^n$ in $\HQ $. 
By the relation in Lemma~\ref{lem_proof_abel0}, we have
\begin{eqnarray}
t_{i,n}=t_{2,n}^{-1}t_{1,i}^{-1}t_{2,i}t_{i+1,n}\delta _0^n &\Longleftrightarrow & t_{1,n-i+1}=t_{1,n-1}^{-1}t_{1,i}^{-1}t_{1,i-1}t_{1,n-i}\delta _0^n\notag \\
&\Longleftrightarrow & t_{1,n-i}=t_{1,i-1}^{-1}t_{1,i}t_{1,n-1}t_{1,n-i+1}\delta _0^{-n}.\label{rel4_1}
\end{eqnarray}
Hence the relations~(4) in Proposition~\ref{prop_pres_QB} for $j=n$ and $2\leq i\leq \frac{n-1}{2}$ are equivalent to the relation~(1) in Lemma~\ref{lem_proof_abel1}. 

In the case of $j=n$ and $\frac{n+3}{2}\leq i\leq n-1$, the integer $i^\prime =n-i+1$ satisfies that $2\leq i^\prime \leq \frac{n-1}{2}$. 
Then the relation~(\ref{rel4_1}) is equivalent to  
\begin{eqnarray*}
&&t_{1,(n-i+1)-1}=t_{1,n-(n-i+1)}^{-1}t_{1,n+1-(n-i+1)}t_{1,n-1}t_{1,n-i+1}\delta _0^{-n}\\ 
&\Longleftrightarrow & t_{1,i^\prime -1}=t_{1,n-i^\prime }^{-1}t_{1,n+1-i^\prime }t_{1,n-1}t_{1,i^\prime }\delta _0^{-n}\\
&\Longleftrightarrow & t_{1,n-i^\prime }=t_{1,i^\prime -1}^{-1}t_{1,i^\prime }t_{1,n-1}t_{1,n+1-i^\prime }\delta _0^{-n}.
\end{eqnarray*}
This relation coincides with the relation~(\ref{rel4_1}) for $2\leq i\leq \frac{n-1}{2}$, that is the relation~(1) in Lemma~\ref{lem_proof_abel1}. 

In the case of $j=n$ and $i=\frac{n+1}{2}$, the relation~(\ref{rel4_1}) is $t_{1,\frac{n-1}{2}}=t_{1,\frac{n-1}{2}}^{-1}t_{1,\frac{n+1}{2}}t_{1,n-1}t_{1,\frac{n+1}{2}}\delta _0^{-n}$ and this relation is equivalent to the relation    
\begin{eqnarray}
t_{1,\frac{n-1}{2}}^2=t_{1,\frac{n+1}{2}}^2t_{1,n-1}\delta _0^{-n}.\label{rel4_2}
\end{eqnarray}
By using the relations~(1) in Lemma~\ref{lem_proof_abel1} for $i=\frac{n-1}{2}, \frac{n-3}{2}, \dots , 2$ consecutively, we have 
\begin{eqnarray*}
t_{1,\frac{n+1}{2}}&=&t_{1,\frac{n-3}{2}}^{-1}t_{1,\frac{n-1}{2}}t_{1,n-1}\underline{t_{1,\frac{n+3}{2}}}\delta _0^{-n}\\
&=&t_{1,\frac{n-3}{2}}^{-1}t_{1,\frac{n-1}{2}}t_{1,n-1}(t_{1,\frac{n-5}{2}}^{-1}t_{1,\frac{n-3}{2}}t_{1,n-1}t_{1,\frac{n+5}{2}}\delta _0^{-n})\delta _0^{-n}\\
&=&t_{1,\frac{n-5}{2}}^{-1}t_{1,\frac{n-1}{2}}t_{1,n-1}^2\underline{t_{1,\frac{n+5}{2}}}\delta _0^{-2n}\\
&=&t_{1,\frac{n-5}{2}}^{-1}t_{1,\frac{n-1}{2}}t_{1,n-1}^2(t_{1,\frac{n-7}{2}}^{-1}t_{1,\frac{n-5}{2}}t_{1,n-1}t_{1,\frac{n+7}{2}}\delta _0^{-n})\delta _0^{-2n}\\
&=&t_{1,\frac{n-7}{2}}^{-1}t_{1,\frac{n-1}{2}}t_{1,n-1}^3\underline{t_{1,\frac{n+7}{2}}}\delta _0^{-3n}\\
&\vdots &\\
&=&t_{1,2}^{-1}t_{1,\frac{n-1}{2}}t_{1,n-1}^{\frac{n-5}{2}}\underline{t_{1,n-2}}\delta _0^{-\frac{n-5}{2}n}\\
&=&t_{1,2}^{-1}t_{1,\frac{n-1}{2}}t_{1,n-1}^{\frac{n-5}{2}}(t_{1,2}t_{1,n-1}^2\delta _0^{-n})\delta _0^{-\frac{n-5}{2}n}\\
&=&t_{1,\frac{n-1}{2}}t_{1,n-1}^{\frac{n-1}{2}}\delta _0^{-\frac{n-3}{2}n}. 
\end{eqnarray*}
Thus the relation~(\ref{rel4_2}) is equivalent to the relation
\begin{align*}
t_{1,\frac{n-1}{2}}^2&=\underline{t_{1,\frac{n+1}{2}}^2}t_{1,n-1}\delta _0^{-n}=(t_{1,\frac{n-1}{2}}t_{1,n-1}^{\frac{n-1}{2}}\delta _0^{-\frac{n-3}{2}n})^2t_{1,n-1}\delta _0^{-n}\\
&=t_{1,\frac{n-1}{2}}^2t_{1,n-1}^{n-1}\delta _0^{-(n-3)n}t_{1,n-1}\delta _0^{-n} \\
&=t_{1,\frac{n-1}{2}}^2t_{1,n-1}^{n}\delta _0^{-(n-2)n}. 
\end{align*}
Therefore, this relation is equivalent to the relation~(2) in Lemma~\ref{lem_proof_abel1} and we have completed the proof of Lemma~\ref{lem_proof_abel1}.   
\end{proof}

\begin{lem}\label{lem_proof_abel2}
When $n\geq 4$ is even, in $H_1(QB_n;\Z )$, the relations~(4) in Proposition~\ref{prop_pres_QB} for $j=n$ and $i>1$ are equivalent to the following relations: 
\begin{enumerate}
\item $t_{1,n-i}=t_{1,i-1}^{-1}t_{1,i}t_{1,n-1}t_{1,n-i+1}\delta _0^{-n}$ \quad for $2\leq i\leq \frac{n-2}{2}$, 
\item $t_{1,n-1}^{\frac{n}{2}}\delta _0^{-\frac{n(n-2)}{2}}=1$. 
\end{enumerate}
\end{lem}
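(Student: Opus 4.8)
The plan is to run the same three-step argument used for Lemma~\ref{lem_proof_abel1}, adjusting the bookkeeping to the even parity. First I would rewrite each relation~(4) of Proposition~\ref{prop_pres_QB} with $j=n$ and $i>1$ (equivalently $2\le i\le n-1$) in $H_1(QB_n;\Z)$: applying the relation~(3) in Proposition~\ref{prop_pres_QB} turns $\delta_0t_{i,n}\delta_0^{-1}=t_{2,n}^{-1}t_{1,i}^{-1}t_{2,i}t_{i+1,n}t_{1,n}$ into $t_{i,n}=t_{2,n}^{-1}t_{1,i}^{-1}t_{2,i}t_{i+1,n}\delta_0^n$, and then Lemma~\ref{lem_proof_abel0} brings it to the same normalized form as~(\ref{rel4_1}), namely
\[
t_{1,n-i}=t_{1,i-1}^{-1}t_{1,i}\,t_{1,n-1}\,t_{1,n-i+1}\,\delta_0^{-n},
\]
exactly as in the odd case.

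Next I would split the index range. For $2\le i\le\frac{n-2}{2}$ this normalized relation is literally relation~(1) of the statement. For $\frac{n+4}{2}\le i\le n-1$, the index $i'=n-i+1$ lies in $\{2,\dots,\frac{n-2}{2}\}$ and, since all manipulations now take place in an abelian group, the normalized relation for $i$ is equivalent to the one for $i'$ (the same rearrangement as in the odd case); hence these relations are redundant. The difference from the odd case is that \emph{two} middle indices survive, $i=\frac n2$ and $i=\frac n2+1$; they are interchanged by $i\leftrightarrow n-i+1$, so again only one of them, say $i=\frac n2$, needs analysis. For that value $t_{1,n/2}$ occurs on both sides of the normalized relation, and cancelling it gives the \emph{middle relation}
\[
t_{1,\frac n2+1}=t_{1,\frac n2-1}\,t_{1,n-1}^{-1}\,\delta_0^{\,n}.
\]

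The core of the proof is the telescoping computation: feeding relations~(1) for $i=\frac{n-2}{2},\frac{n-4}{2},\dots,2$ into the product that expresses $t_{1,n/2+1}$ through $t_{1,n/2-1}$, $t_{1,n-1}$ and $\delta_0$ — the cascade entirely analogous to the one in the proof of Lemma~\ref{lem_proof_abel1} — yields
\[
t_{1,\frac n2+1}=t_{1,\frac n2-1}\,t_{1,n-1}^{\frac n2-1}\,\delta_0^{-\frac{n(n-4)}{2}}.
\]
Comparing this with the middle relation and cancelling $t_{1,n/2-1}$ gives $t_{1,n-1}^{n/2}\delta_0^{-n(n-2)/2}=1$, which is relation~(2). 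Since every step above is an equivalence in an abelian group (in particular relations~(1) and~(2) together reconstruct the middle relation, hence all relations~(4) with $j=n$, $i>1$), this establishes the claimed equivalence. I would also record the case $n=4$ separately, where the range $2\le i\le\frac{n-2}{2}$ is empty, the telescoping is vacuous, and the middle relation for $i=2$ reads $t_{1,3}^2=\delta_0^4$, which is exactly relation~(2).

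The main obstacle is purely computational bookkeeping: keeping the exponents of $t_{1,n-1}$ and $\delta_0$ correct through the telescoping while the terminal index $n-2$ and the parity-dependent middle index $n/2$ interact, and checking that the cascade degenerates gracefully for small $n$ (e.g.\ $n=6$, where it collapses to the single relation~(1) with $i=2$). There is no conceptual novelty beyond Lemma~\ref{lem_proof_abel1}.
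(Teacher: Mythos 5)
Your proposal is correct and follows essentially the same route as the paper: normalize relation~(4) via relation~(3) and Lemma~\ref{lem_proof_abel0} to the form~(\ref{rel4_1}), discard the range $\frac{n+4}{2}\leq i\leq n-1$ by the symmetry $i\mapsto n-i+1$, reduce the two middle indices $i=\frac{n}{2},\frac{n}{2}+1$ to the single relation~(\ref{rel4_3}), and telescope relations~(1) to obtain $t_{1,\frac{n+2}{2}}=t_{1,\frac{n-2}{2}}t_{1,n-1}^{\frac{n-2}{2}}\delta_0^{-\frac{n-4}{2}n}$, whence relation~(2). Your exponent bookkeeping and the small-$n$ checks ($n=4,6$) agree with the paper's computation.
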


\begin{proof}
By an argument similar to the top of the proof of Lemma~\ref{lem_proof_abel1}, the relations~(4) in Proposition~\ref{prop_pres_QB} for $j=n$ and $i>1$ are equivalent to the relations~(\ref{rel4_1}), that are $t_{1,n-i}=t_{1,i-1}^{-1}t_{1,i}t_{1,n-1}t_{1,n-i+1}\delta _0^{-n}$, and these relations coincide with the relations~(1) in Lemma~\ref{lem_proof_abel2} for the cases $2\leq i\leq \frac{n-2}{2}$ and $\frac{n+4}{2}\leq i\leq n-1$. 

In the case of $j=n$ and $i=\frac{n}{2}$, the relation~(\ref{rel4_1}) is $t_{1,\frac{n}{2}}=t_{1,\frac{n-2}{2}}^{-1}t_{1,\frac{n}{2}}t_{1,n-1}t_{1,\frac{n+2}{2}}\delta _0^{-n}$ and this relation is equivalent to the relation    
\begin{eqnarray}
t_{1,\frac{n-2}{2}}=t_{1,n-1}t_{1,\frac{n+2}{2}}\delta _0^{-n}.\label{rel4_3}
\end{eqnarray}
Similarly, for $j=n$ and $i=\frac{n+2}{2}$, the relation~(\ref{rel4_1}) is equivalent to the relation
\[
t_{1,\frac{n-2}{2}}=t_{1,\frac{n}{2}}^{-1}t_{1,\frac{n+2}{2}}t_{1,n-1}t_{1,\frac{n}{2}}\delta _0^{-n}\Longleftrightarrow t_{1,\frac{n-2}{2}}=t_{1,n-1}t_{1,\frac{n+2}{2}}\delta _0^{-n}
\]
and this relation coincides with the relation~(\ref{rel4_3}).  
By using the relations~(1) in Lemma~\ref{lem_proof_abel2} for $i=\frac{n-2}{2}, \frac{n-4}{2}, \dots , 2$ consecutively, we have 
\begin{eqnarray*}
t_{1,\frac{n+2}{2}}&=&t_{1,\frac{n-4}{2}}^{-1}t_{1,\frac{n-2}{2}}t_{1,n-1}\underline{t_{1,\frac{n+4}{2}}}\delta _0^{-n}\\
&=&t_{1,\frac{n-4}{2}}^{-1}t_{1,\frac{n-2}{2}}t_{1,n-1}(t_{1,\frac{n-6}{2}}^{-1}t_{1,\frac{n-4}{2}}t_{1,n-1}t_{1,\frac{n+6}{2}}\delta _0^{-n})\delta _0^{-n}\\
&=&t_{1,\frac{n-6}{2}}^{-1}t_{1,\frac{n-2}{2}}t_{1,n-1}^2\underline{t_{1,\frac{n+6}{2}}}\delta _0^{-2n}\\
&=&t_{1,\frac{n-6}{2}}^{-1}t_{1,\frac{n-2}{2}}t_{1,n-1}^2(t_{1,\frac{n-8}{2}}^{-1}t_{1,\frac{n-6}{2}}t_{1,n-1}t_{1,\frac{n+8}{2}}\delta _0^{-n})\delta _0^{-2n}\\
&=&t_{1,\frac{n-8}{2}}^{-1}t_{1,\frac{n-2}{2}}t_{1,n-1}^3\underline{t_{1,\frac{n+8}{2}}}\delta _0^{-3n}\\
&\vdots &\\
&=&t_{1,2}^{-1}t_{1,\frac{n-2}{2}}t_{1,n-1}^{\frac{n-6}{2}}\underline{t_{1,n-2}}\delta _0^{-\frac{n-6}{2}n}\\
&=&t_{1,2}^{-1}t_{1,\frac{n-2}{2}}t_{1,n-1}^{\frac{n-6}{2}}(t_{1,2}t_{1,n-1}^2\delta _0^{-n})\delta _0^{-\frac{n-6}{2}n}\\
&=&t_{1,\frac{n-2}{2}}t_{1,n-1}^{\frac{n-2}{2}}\delta _0^{-\frac{n-4}{2}n}. 
\end{eqnarray*}
Thus the relation~(\ref{rel4_3}) is equivalent to the relation
\begin{align*}
&t_{1,\frac{n-2}{2}}=t_{1,n-1}\underline{t_{1,\frac{n+2}{2}}}\delta _0^{-n}=t_{1,n-1}(t_{1,\frac{n-2}{2}}t_{1,n-1}^{\frac{n-2}{2}}\delta _0^{-\frac{n-4}{2}n})\delta _0^{-n}=t_{1,\frac{n-2}{2}}t_{1,n-1}^{\frac{n}{2}}\delta _0^{-\frac{n-2}{2}n}\\
&\Longleftrightarrow t_{1,n-1}^{\frac{n}{2}}\delta _0^{-\frac{n(n-2)}{2}}=1. 
\end{align*}
Therefore, this relation is equivalent to the relation~(2) in Lemma~\ref{lem_proof_abel2} and we have completed the proof of Lemma~\ref{lem_proof_abel2}.   
\end{proof}

\begin{proof}[The proof of Theorem~\ref{thm_abel}]
We will calculate $\HQ $ by using the finite presentation for $QB_n $ in Proposition~\ref{prop_pres_QB}. 
First, the relations~(1), (2), and (4) for $(i,j)=(1,n)$ in Proposition~\ref{prop_pres_QB} are trivial in $\HQ $. 
The relation~(3) in Proposition~\ref{prop_pres_QB} means that $t_{1,n}$ is a product of $\delta _0$.  
Remark that the relations $t_{i,j}=t_{1,j-i+1}$ for $1\leq i<j\leq n$ hold in $\HQ $ by Lemma~\ref{lem_proof_abel0} and the relations~(4) for $j<n$ in Proposition~\ref{prop_pres_QB} are obtained from these relations. %$t_{i,j}=t_{1,j-i+1}$ in  Lemma~\ref{lem_proof_abel0} and t
The relations~(4) for $j=n$ and $i>1$ are equivalent to the relations in Lemmas~\ref{lem_proof_abel1} and~\ref{lem_proof_abel2}. 
The relations~(1) in Lemma~\ref{lem_proof_abel1} (resp. in Lemma~\ref{lem_proof_abel2}) imply that $t_{1,j}$ for $\frac{n+1}{2}\leq j\leq n-2$ (resp. for $\frac{n+2}{2}\leq j\leq n-2$) is a product of $t_{1,j}$ for $2\leq j\leq \frac{n-1}{2}$ (resp. for $2\leq j\leq \frac{n}{2}$), $t_{1,n-1}$, and $\delta _0$ in $\HQ $. 
Hence, when $n\geq 3$ is odd and as a presentation for an abelian group, we have
\begin{eqnarray*}
&&\HQ \\
&\cong &\left< t_{1,j}\ (2\leq j\leq \frac{n-1}{2}),\ t_{1,n-1},\ \delta _0 \middle|  t_{1,n-1}^n\delta _0^{-n(n-2)}=1\right> \\
&\cong &\left< t_{1,j}\ (2\leq j\leq \frac{n-1}{2}),\ t_{1,n-1},\ \delta _0,\ X \middle|  (t_{1,n-1}\delta _0^{-n+2})^n=1,\ X=t_{1,n-1}\delta _0^{-n+2}\right> \\
&\cong &\left< t_{1,j}\ (2\leq j\leq \frac{n-1}{2}),\ \delta _0,\ X \middle|  X^n=1\right> \\
&\cong &\Z ^{\frac{n-1}{2}}\oplus \Z _n.
\end{eqnarray*}

When $n\geq 4$ is even, we have
\begin{eqnarray*}
&&\HQ \\
&\cong &\left< t_{1,j}\ (2\leq j\leq \frac{n}{2}),\ t_{1,n-1},\ \delta _0 \middle|  t_{1,n-1}^{\frac{n}{2}}\delta _0^{-\frac{n(n-2)}{2}}=1\right> \\
&\cong &\left< t_{1,j}\ (2\leq j\leq \frac{n}{2}),\ t_{1,n-1},\ \delta _0,\ X \middle|  (t_{1,n-1}\delta _0^{-n+2})^{\frac{n}{2}}=1,\ X=t_{1,n-1}\delta _0^{-n+2}\right> \\
&\cong &\left< t_{1,j}\ (2\leq j\leq \frac{n}{2}),\ \delta _0,\ X \middle|  X^{\frac{n}{2}}=1\right> \\
&\cong &\Z ^{\frac{n}{2}}\oplus \Z _{\frac{n}{2}}.
\end{eqnarray*}
Therefore, we have completed the proof of Theorem~\ref{thm_abel}. 
\end{proof}

\section{Minimal generating sets for the quasitoric braid group}\label{section_mini-gen}

In this section, we prove Theorem~\ref{thm_mini-gen} and give two minimal generating sets for $QB_n$. 
Assume that $n\geq 3$. 
Recall that $\delta _0=\sigma _1\sigma _2\cdots \sigma _{n-1}\in QB_n$ and $t_{i,j}$ for $1\leq i<j\leq n$ is the full-twist braid from $i$-th to $j$-th strings as in Figure~\ref{fig_full-twist-def}. 
Theorem~\ref{thm_mini-gen} follows from the next theorem. 
\begin{thm}\label{thm_mini-gen1}
$QB_n$ is generated by $\delta _0$ and $t_{1,j}$ for $2\leq j\leq \frac{n+1}{2}$ if $n$ is odd, and for $2\leq j\leq \frac{n+2}{2}$ if $n$ is even. 
\end{thm}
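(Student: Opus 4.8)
The plan is to start from the finite presentation of $QB_n$ given in Proposition~\ref{prop_pres_QB}, whose generators are $\delta_0$ and the full twists $t_{i,j}$ for $1\le i<j\le n$, and to reduce this generating set by repeated rewriting. Two reductions come for free. First, iterating Lemma~\ref{lem_t_ij-delta_conj} gives $t_{i,j}=\delta_0^{i-1}t_{1,j-i+1}\delta_0^{-(i-1)}$ for all $1\le i<j\le n$, so every $t_{i,j}$ is a $\delta_0$-conjugate of some $t_{1,k}$ with $2\le k\le n$; second, relation~(3) of Proposition~\ref{prop_pres_QB} gives $t_{1,n}=\delta_0^n$. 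Hence $QB_n$ is already generated by $\delta_0$ together with $t_{1,k}$ for $2\le k\le n-1$. Writing $N=\frac{n+1}{2}$ if $n$ is odd and $N=\frac{n+2}{2}$ if $n$ is even, and letting $G$ be the subgroup of $QB_n$ generated by $\delta_0$ and $t_{1,2},\dots,t_{1,N}$, it therefore suffices to prove that $t_{1,k}\in G$ for every $N<k\le n-1$. Note that, by the conjugation formula above, $G$ automatically contains every $t_{i,j}$ whose ``length'' $j-i+1$ is at most $N$.

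The engine of the argument is relation~(4) of Proposition~\ref{prop_pres_QB} in the case $j=n$ and $i>1$. Substituting into it the identities $t_{i,n}=\delta_0^{i-1}t_{1,n-i+1}\delta_0^{-(i-1)}$, $t_{2,n}=\delta_0 t_{1,n-1}\delta_0^{-1}$, $t_{2,i}=\delta_0 t_{1,i-1}\delta_0^{-1}$, $t_{i+1,n}=\delta_0^{i}t_{1,n-i}\delta_0^{-i}$ and $t_{1,n}=\delta_0^n$, and solving for $t_{1,n-i}$, a direct computation yields the identity
\[
t_{1,n-i}=\delta_0^{-(i-1)}t_{1,i-1}^{-1}\delta_0^{-1}t_{1,i}\,\delta_0\,t_{1,n-1}\,\delta_0^{i-1}\,t_{1,n-i+1}\,\delta_0^{-n}
\]
in $QB_n$ for every $2\le i\le n-1$ (with the convention $t_{1,1}=1$). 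I would record this as a preliminary lemma, since both of the remaining steps are applications of it.

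First I would apply this identity at the \emph{middle} index --- $i=\frac{n+1}{2}$ when $n$ is odd and $i=\frac{n}{2}$ when $n$ is even --- to show $t_{1,n-1}\in G$. For that value of $i$ the full twists $t_{i,n}$, $t_{i+1,n}$ and $t_{2,i}$ occurring on the right-hand side all have length at most $N$ (this is exactly the numerology that pins the cutoff at $N$), so after one more round of rewriting every factor in the resulting expression except $t_{1,n-1}$ itself is a power of $\delta_0$ or one of $t_{1,N}^{\pm1}$, $t_{1,N-1}^{\pm1}$; solving for $t_{1,n-1}$ then exhibits it as a word in $\delta_0$ and the generators of $G$. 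Once $t_{1,n-1}\in G$ is known, I would run a downward induction on $m$ from $n-2$ to $N+1$: applying the displayed identity with $i=n-m\ (\le N-1)$, the factors $t_{1,i-1},t_{1,i}$ are generators of $G$, the factor $t_{1,n-1}$ lies in $G$ by the previous step, and $t_{1,n-i+1}=t_{1,m+1}$ lies in $G$ by the inductive hypothesis (or is a generator of $G$, or equals $t_{1,n-1}$); hence $t_{1,m}=t_{1,n-i}\in G$. Together with $t_{1,n}=\delta_0^n\in G$ this shows $G$ contains all $t_{1,k}$ for $2\le k\le n$, hence all $t_{i,j}$, hence $G=QB_n$. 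Minimality of this generating set is not part of the present statement; it follows from Theorem~\ref{thm_abel}, since $H_1(QB_n;\Z)$ cannot be generated by fewer than $\frac{n+1}{2}$ (resp.\ $\frac{n+2}{2}$) elements.

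I expect the main obstacle to be the bookkeeping in the middle-index step: one must rewrite relation~(4) for that particular $i$ carefully enough that $t_{1,n-1}$ is the only factor not visibly lying in $G$, and this works precisely because the lengths of the full twists appearing there land at the threshold $N$ --- which is why the odd and even cases carry different cutoffs and why one cannot shrink the generating set further by this method. The downward induction afterwards is then routine, being nothing more than repeated substitution into the single displayed identity.
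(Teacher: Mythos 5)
Your proposal is correct and takes essentially the same route as the paper: after the same reduction to showing $t_{1,j}\in G$ for $j>N$, you first extract $t_{1,n-1}$ and then run the identical downward induction, and your key identity is just relation~(4) of Proposition~\ref{prop_pres_QB} conjugated by powers of $\delta_0$ --- the same lantern-type relation the paper re-derives geometrically in the relations~(\ref{eq_lantern_rel1}) and~(\ref{eq_lantern_rel2}), applied at the corresponding index (your middle index $i$ matches the paper's choice $j=N-1$ via $i=n-j$). The argument is sound as written.
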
 

The generating set in Theorem~\ref{thm_mini-gen1} is a subset of the generating set of the presentation in Proposition~\ref{prop_pres_QB}. 

For $1\leq i\leq n-1$, we denote a $(n,1)$-quasitoric braid 
\[
\delta _i=\sigma _1\cdots \sigma _{n-i-1}\sigma _{n-i}^{-1}\cdots \sigma _{n-1}^{-1}\in QB_n.
\] 
The next theorem give another minimal generation get for $QB_n$. 

\begin{thm}\label{thm_mini-gen2}
$QB_n$ is generated by $\delta _i$ for $0\leq i\leq \frac{n-1}{2}$ if $n$ is odd, and for $0\leq j\leq \frac{n}{2}$ if $n$ is even. 
\end{thm}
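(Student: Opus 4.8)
The plan is to deduce Theorem~\ref{thm_mini-gen2} from Theorem~\ref{thm_mini-gen1} by showing that the two candidate generating sets generate the same subgroup of $QB_n$. Concretely, I would let $G_1 = \langle \delta_0, t_{1,j} : 2\le j\le \lceil (n+1)/2\rceil\rangle$ (the generating set of Theorem~\ref{thm_mini-gen1}, with the appropriate upper bound depending on the parity of $n$) and $G_2 = \langle \delta_i : 0\le i\le \lfloor n/2\rfloor\rangle$, and prove $G_1 = G_2$; since $G_1 = QB_n$ by Theorem~\ref{thm_mini-gen1}, this gives $G_2 = QB_n$ as well.

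The key computational input is a formula expressing each $\delta_i$ in terms of $\delta_0$ and the full-twists $t_{1,j}$, and conversely. For the forward inclusion $G_2\subseteq G_1$, note that $\delta_i = \sigma_1\cdots\sigma_{n-i-1}\sigma_{n-i}^{-1}\cdots\sigma_{n-1}^{-1}$ differs from $\delta_0 = \sigma_1\cdots\sigma_{n-1}$ only in the last $i$ letters, so $\delta_0^{-1}\delta_i = \sigma_{n-1}\cdots\sigma_{n-i}\cdot\sigma_{n-i}^{-1}\cdots\sigma_{n-1}^{-1}$; more usefully, $\delta_i\delta_0^{-1}$ or $\delta_0^{-1}\delta_i$ should be expressible as a product of conjugates of $\sigma_{n-i}^{-2},\dots,\sigma_{n-1}^{-2}$, hence of full-twists $t_{k,k+1}$, which by Lemma~\ref{lem_t_ij-delta_conj} are conjugate under powers of $\delta_0$ to the $t_{1,2}$-type generators already in $G_1$. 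I would carry this out by an explicit telescoping identity: writing $\delta_i = \delta_{i-1}\cdot(\text{correction})$ and identifying the correction term as $\sigma_{n-i}^{-2}$ up to conjugation, then pushing everything down to $t_{1,j}$'s via Lemma~\ref{lem_t_ij-delta_conj}. For the reverse inclusion $G_1\subseteq G_2$, I would show first that $\delta_0 = \delta_0 \in G_2$ trivially, and then that each $t_{1,j}$ for $2\le j\le \lceil(n+1)/2\rceil$ lies in $G_2$: the product $\delta_i\delta_0^{-1}$ (or a suitable word in $\delta_i,\delta_0$) equals a single $\sigma$-square up to conjugation, and inverting the telescoping relations recovers $t_{1,2}, t_{1,3},\dots$ inductively — this is where one uses that the range of $i$ in Theorem~\ref{thm_mini-gen2} matches the range of $j$ in Theorem~\ref{thm_mini-gen1} after the shift $j\leftrightarrow n-i$ or similar, and the parity bookkeeping must be checked separately for $n$ odd and $n$ even.

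I expect the main obstacle to be the bookkeeping of conjugations: the natural identities produce full-twists $t_{k,l}$ with $k>1$, and one must repeatedly invoke $\delta_0^{-1}t_{i,j}\delta_0 = t_{i-1,j-1}$ (Lemma~\ref{lem_t_ij-delta_conj}) to slide the index down to $1$, while keeping track of which $\delta_0$-conjugates are actually available in each group. A secondary subtlety is that $t_{1,n}$ (the full-twist on all strands) equals $\delta_0^n$ by relation~(3) of Proposition~\ref{prop_pres_QB}, so it is automatically in both groups; one should use this to avoid worrying about the ``boundary'' generator. The cleanest route is probably to fix the explicit word identity
\[
\delta_i = \delta_0\cdot (\delta_0^{n-i}t_{1,2}^{-1}\delta_0^{-(n-i)})\cdot(\delta_0^{n-i+1}t_{1,2}^{-1}\delta_0^{-(n-i+1)})\cdots(\delta_0^{n-1}t_{1,2}^{-1}\delta_0^{-(n-1)})
\]
or an analogous one obtained by direct manipulation of the $\sigma$-words (verifying it by a picture like Figure~\ref{fig_t_ij-delta_conj}), observe that $\delta_0^k t_{1,2}^{-1}\delta_0^{-k} = t_{k+1,k+2}^{-1}$ (indices mod $n$, with $t_{1,n}=\delta_0^n$ handling the wrap-around), and then check that the set $\{\delta_0\}\cup\{\delta_i : 1\le i\le \lfloor n/2\rfloor\}$ and the set $\{\delta_0\}\cup\{t_{1,j} : 2\le j \le \lceil(n+1)/2\rceil\}$ mutually generate by solving the resulting linear (telescoping) system of relations. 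The final step is to observe that the lower bound from Theorem~\ref{thm_abel} applies verbatim — the cardinality of the new generating set is the same $\frac{n+1}{2}$ (resp. $\frac{n+2}{2}$) — so minimality is immediate and needs no separate argument.
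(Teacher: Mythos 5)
Your overall plan---deduce Theorem~\ref{thm_mini-gen2} from Theorem~\ref{thm_mini-gen1} by showing that every $t_{1,j}$ with $2\le j\le N$ lies in $\langle \delta_0,\delta_1,\dots\rangle$ (only this inclusion is needed; your ``forward inclusion'' is automatic because the group of Theorem~\ref{thm_mini-gen1} is already all of $QB_n$)---is the same as the paper's. However, the explicit identity on which you hang the whole computation is false. Test it at $n=3$, $i=1$: your formula reads $\delta_1=\delta_0\cdot\delta_0^{2}t_{1,2}^{-1}\delta_0^{-2}$, while a direct computation gives $\delta_0^{-1}\delta_1=\sigma_2^{-2}=t_{2,3}^{-1}$, so the claimed identity is equivalent to $t_{2,3}=\delta_0^{2}t_{1,2}\delta_0^{-2}=\delta_0 t_{2,3}\delta_0^{-1}$, i.e.\ to $\delta_0$ commuting with $t_{2,3}$; but relation~(4) of Proposition~\ref{prop_pres_QB} gives $\delta_0 t_{2,3}\delta_0^{-1}=t_{2,3}^{-1}t_{1,2}^{-1}t_{1,3}\neq t_{2,3}$. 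The conceptual source of the error: the correction term is $\delta_{i-1}^{-1}\delta_i=\sigma_{n-1}\cdots\sigma_{n-i+1}\,\sigma_{n-i}^{-2}\,\sigma_{n-i+1}^{-1}\cdots\sigma_{n-1}^{-1}$, which is indeed a conjugate of $\sigma_{n-i}^{-2}$, but the conjugator is a $\sigma$-word, not a power of $\delta_0$; for $i\ge 2$ this element is the inverse twist of strands $n-i$ and $n$ about each other (a standard pure braid generator whose curve encloses two non-adjacent punctures), not any $t_{k,k+1}^{-1}=\delta_0^{k-1}t_{1,2}^{-1}\delta_0^{-(k-1)}$. (Your wrap-around factor $\delta_0^{n-1}t_{1,2}^{-1}\delta_0^{-(n-1)}$ is also not covered by Lemma~\ref{lem_t_ij-delta_conj} and is not repaired by centrality of $t_{1,n}$.) So the ``telescoping linear system'' you propose to solve is not the system that actually holds, and the proposal supplies no substitute.

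For comparison, the paper's mechanism is: verify pictorially (Figures~\ref{fig_proof_smallgen1} and~\ref{fig_proof_smallgen2}) the identities $\delta_0^{-1}\delta_{n-i}=\sigma_{n-1}^{-1}\cdots\sigma_i^{-1}\sigma_i^{-1}\cdots\sigma_{n-1}^{-1}$ and $t_{i,n}^{-1}=(\delta_0^{-1}\delta_{n-i})\,t_{i,n-1}^{-1}$, rewrite the latter via Lemma~\ref{lem_t_ij-delta_conj} as the recursion $t_{i,n}^{-1}=(\delta_0^{-1}\delta_{n-i})\,\delta_0^{-1}t_{i+1,n}^{-1}\delta_0$, start it with $t_{n-1,n}^{-1}=\delta_0^{-1}\delta_1$, and descend to get $t_{n-j+1,n}\in G$ (hence $t_{1,j}\in G$) for $2\le j\le N$. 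The place where the specific generating range $0\le i\le \lfloor n/2\rfloor$ is actually used is the inequality $n-i\le n-N\le N-1$, which guarantees that every $\delta_{n-i}$ appearing in the recursion is among the given generators; this is precisely the ``parity bookkeeping'' you defer and never carry out. So as written the argument has a genuine gap: its central identity is wrong, and the step where the size of the generating set enters is missing.
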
 

For conveniences, we put 
\[
N= \left\{ \begin{array}{ll}
 \frac{n+1}{2}&\text{if } n\geq 3 \text{ is odd},   \\
 \frac{n+2}{2}& \text{if } n\geq 4 \text{ is even}.
 \end{array} \right.
\]
We remark that $n-N\leq N-1$ since $n-N=\frac{n-1}{2}$ if $n$ is odd and $n-N=\frac{n-2}{2}$ if $n$ is even. 

\begin{proof}[Proof of Theorem~\ref{thm_mini-gen1}]
Let $G$ be a subgroup of $QB_n$ which is generated by $\delta _0$ and $t_{1,j}$ for $2\leq j\leq N$. 
By Proposition~\ref{prop_pres_QB}, $QB_n$ is generated by $\delta _0$ and $t_{i,j}$ for $1\leq i<j\leq n$. 
Since $t_{i,j}=\delta _0^{i-1}t_{1,j-i+1}\delta _0^{-(i-1)}$ for $2\leq i<j\leq n$ by Lemma~\ref{lem_t_ij-delta_conj}, we have $t_{i,j}\in G$ for $j-i+1\leq N$ and it is enough for completing the proof of Theorem~\ref{thm_mini-gen1} to prove that $t_{1,j}\in G$ for $N+1\leq j\leq n$. %, that is equivalent to $t_{1,n-i+1}\in G$ for $1\leq i\leq n-N$. 

First, by the relation~(3) in Proposition~\ref{prop_pres_QB}, we have $t_{1,n}=\delta_0^n\in G$. 
When $n=3$ or $n=4$, since $n\geq j\geq N+1=n$, we have $t_{1,j}=t_{1,n}\in G$ and have completed the proof. %$1\leq i\leq n-N=1$, we have $t_{1,n-i+1}=t_{1,n}\in G$ and have completed the proof. 
We assume that $n\geq 5$ and recall that $t_{i,i}=1\in B_n$ for $1\leq i\leq n$.  
For $2\leq j\leq n$, by  lantern relation as in Figure~\ref{fig_lantern_rel-no2}, we have
\begin{eqnarray}
&& t_{1,n-1}t_{j+1,n}\cdot \delta _0^{-1}t_{1,j+1}\delta _0=t_{1,j}t_{j+1,n-1}t_{1,n}\notag \\ 
&\Longleftrightarrow & t_{1,n-1}=t_{1,j}t_{j+1,n-1}t_{1,n}\cdot \delta _0^{-1}t_{1,j+1}^{-1}\delta _0\cdot t_{j+1,n}^{-1}\label{eq_lantern_rel1}\\
&\Longleftrightarrow & t_{1,j}=t_{1,n-1}t_{j+1,n}\cdot \delta _0^{-1}t_{1,j+1}\delta _0\cdot t_{1,n}^{-1}t_{j+1,n-1}^{-1}.\label{eq_lantern_rel2}
\end{eqnarray}
By the relation~(\ref{eq_lantern_rel1}) for $j=N-1$, we have 
\[
t_{1,n-1}=t_{1,N-1}t_{N,n-1}t_{1,n}\cdot \delta _0^{-1}t_{1,N}^{-1}\delta _0\cdot t_{N,n}^{-1}.
\] 
Since $(n-1)-N+1=n-N\leq N-1\leq N$ and $n-N+1\leq N$, we have $t_{N,n-1},\ t_{N,n}\in G$. 
Hence we have $t_{1,n-1}\in G$.  

By the relations~(\ref{eq_lantern_rel2}) for $j=n-2, n-3, \dots , N+1$, we have inductively
\begin{eqnarray*}
t_{1,n-2}&=&t_{1,n-1}t_{n-1,n}\cdot \delta _0^{-1}t_{1,n-1}\delta _0\cdot t_{1,n}^{-1}t_{n-1,n-1}^{-1}\in G, \\
t_{1,n-3}&=&t_{1,n-1}t_{n-2,n}\cdot \delta _0^{-1}t_{1,n-2}\delta _0\cdot t_{1,n}^{-1}t_{n-2,n-1}^{-1}\in G,\\
t_{1,n-4}&=&t_{1,n-1}t_{n-3,n}\cdot \delta _0^{-1}t_{1,n-3}\delta _0\cdot t_{1,n}^{-1}t_{n-3,n-1}^{-1}\in G,\\
&\vdots &\\
t_{1,N+1}&=&t_{1,n-1}t_{N+2,n}\cdot \delta _0^{-1}t_{1,N+2}\delta _0\cdot t_{1,n}^{-1}t_{N+2,n-1}^{-1}\in G.
\end{eqnarray*}
Therefore $G=QB_n$ and we have completed the proof of Theorem~\ref{thm_mini-gen1}.  
\end{proof}

\begin{figure}[h]
\includegraphics[scale=0.8]{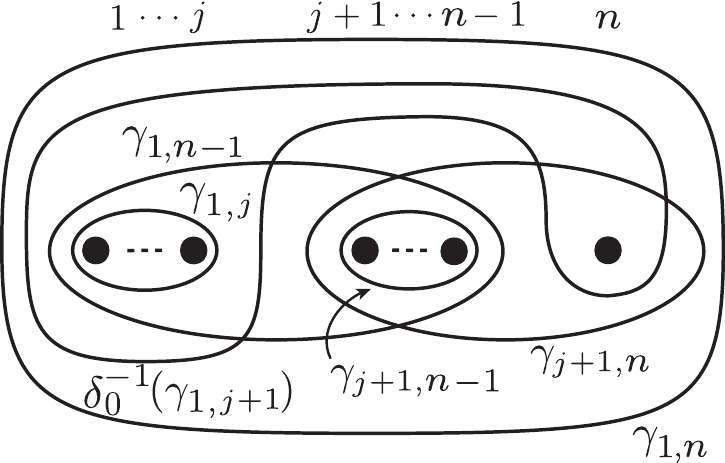}
\caption{Simple closed curves which are appeared in the lantern relation $t_{1,n-1}t_{j+1,n}\cdot \delta _0^{-1}t_{1,j+1}\delta _0=t_{1,j}t_{j+1,n-1}t_{1,n}$ for $2\leq j\leq n$.}\label{fig_lantern_rel-no2}
\end{figure}

\begin{proof}[Proof of Theorem~\ref{thm_mini-gen2}]
Let $G$ be a subgroup of $QB_n$ which is generated by $\delta _i$ for $0\leq i\leq N-1$. 
By Theorem~\ref{thm_mini-gen1}, $QB_n$ is generated by $\delta _0$ and $t_{1,j}$ for $2\leq j\leq N$. 
Hence, it is enough for completing the proof of Theorem~\ref{thm_mini-gen2} to prove that $t_{1,j}\in G$ for $2\leq j\leq N$. 
In particular, since $t_{1,j}=\delta _0^{-(n-j)}t_{n-j+1,n}\delta _0^{n-j}$ for $2\leq j\leq n$ by Lemma~\ref{lem_t_ij-delta_conj}, we prove $t_{n-j+1,n}\in G$ for $2\leq j\leq N$. 

First, for $N\leq i\leq n-1$, the element $\delta _{n-i}$ lies in $G$ since $n-i\leq n-N\leq N-1$. 
Hence, by Figure~\ref{fig_proof_smallgen1}, we have 
\begin{eqnarray}
\sigma _{n-1}^{-1}\cdots \sigma _i^{-1}\sigma _i^{-1}\cdots \sigma _{n-1}^{-1}=\delta _0^{-1}\delta _{n-i}\in G \label{eq_proof_mini1}
\end{eqnarray}
for $N\leq i\leq n-1$. 
Hence we have $t_{n-1,n}^{-1}=\sigma _{n-1}^{-2}=\delta _0^{-1}\delta _{1}\in G$. 
This implies that we have completed the proof of Theorem~\ref{thm_mini-gen2} when $n=3$. 

We assume that $n\geq 4$. 
Then we remark that $\delta _2\in G$ since $N-1\geq 2$. 
By Figure~\ref{fig_proof_smallgen2} and the relation~(\ref{eq_proof_mini1}), we have 
\[
t_{i,n}^{-1}=(\sigma _{n-1}^{-1}\cdots \sigma _i^{-1}\sigma _i^{-1}\cdots \sigma _{n-1}^{-1})t_{i,n-1}^{-1}=(\delta _0^{-1}\delta _{n-i})t_{i,n-1}^{-1}.
\] 
In particular, by using Lemma~\ref{lem_t_ij-delta_conj}, we have
\begin{eqnarray}
t_{i,n}^{-1}=(\delta _0^{-1}\delta _{n-i})\delta _0^{-1}t_{i+1,n}^{-1}\delta _0 \label{eq_proof_mini2}
\end{eqnarray}
for $1\leq i\leq n-1$. 
Hence, by using the relations~(\ref{eq_proof_mini1}) and~(\ref{eq_proof_mini2}), we show that $t_{n-j+1,n}^{-1}\in G$ for $3\leq j\leq N$ inductively as follows: 
\begin{eqnarray*}
%t_{n-1,n}^{-1}&=&\sigma _{n-1}^{-2}=\delta _0^{-1}\delta _{1}\in G, \\
t_{n-2,n}^{-1}&=&(\delta _0^{-1}\delta _{2})\delta _0^{-1}t_{n-1,n}^{-1}\delta _0 \in G,\\
t_{n-3,n}^{-1}&=&(\delta _0^{-1}\delta _{3})\delta _0^{-1}t_{n-2,n}^{-1}\delta _0 \in G,\\
&\vdots &\\
t_{n-N+1,n}^{-1}&=&(\delta _0^{-1}\delta _{N-1})\delta _0^{-1}t_{n-N+2,n}^{-1}\in G.
\end{eqnarray*}
Thus, we have $t_{n-j+1,n}\in G$ for $2\leq j\leq N$ and $G=QB_n$. 
Therefore, we have completed the proof of Theorem~\ref{thm_mini-gen2}.  
\end{proof}

\begin{figure}[h]
\includegraphics[scale=1.0]{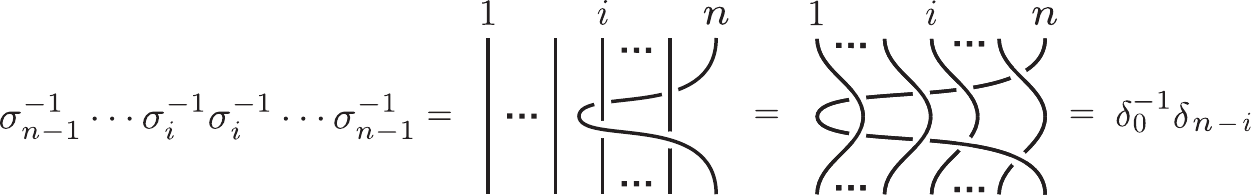}
\caption{The relation $\sigma _{n-1}^{-1}\cdots \sigma _i^{-1}\sigma _i^{-1}\cdots \sigma _{n-1}^{-1}=\delta _0^{-1}\delta _{n-i}$.}\label{fig_proof_smallgen1}
\end{figure}

\begin{figure}[h]
\includegraphics[scale=1.0]{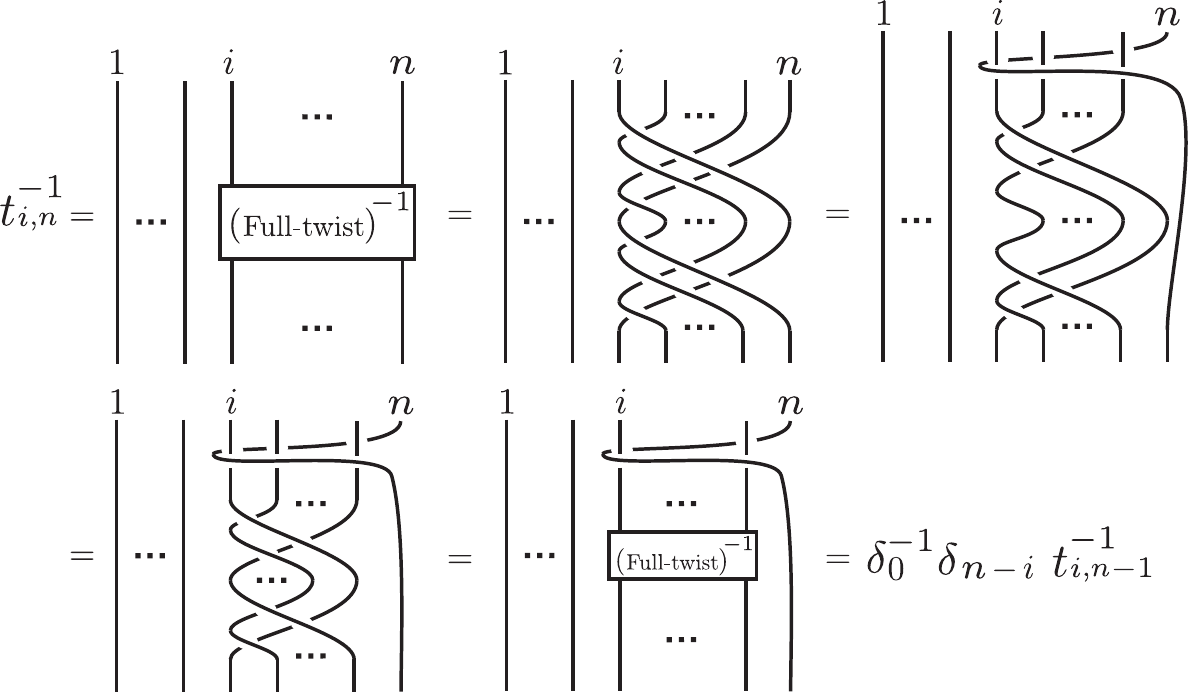}
\caption{The relation $t_{i,n}^{-1}=\delta _0^{-1}\delta _{n-1}t_{i,n-1}^{-1}$.}\label{fig_proof_smallgen2}
\end{figure}

\par
{\bf Acknowledgement:} The author would like to express his gratitude to Christoph Lamm, for helpful advices and telling him previous researches on rosette braids and relationship between rosette braids and quasitoric braids. % his encouragement and helpful advices. 
The author was supported by JSPS KAKENHI Grant Number JP21K13794.
%The authors also wish to thank Susumu Hirose for his comments and helpful advices.
%JST CREST Grant Number JPMJCR17J4, Japan. 
%Grant-in-Aid for Research Activity Start-up

\end{document}